\theoremstyle{plain}
\newtheorem{theorem}{Theorem}[section]
\newtheorem{cor}[theorem]{Corollary}
\newtheorem{prop}[theorem]{Proposition}
\newtheorem{lemma}[theorem]{Lemma}
\theoremstyle{definition}
\newtheorem{definition}[theorem]{Definition}
\newcommand{\R}{\mathbb{R}}
\newcommand{\N}{\mathbb{N}}
\newcommand{\C}{\mathbb{C}}
\newcommand{\K}{\mathbb{K}}
\newcommand{\eps}{\varepsilon}
\DeclareMathOperator{\re}{Re}
\DeclareMathOperator{\NA}{NA}
\DeclareMathOperator{\NRA}{NRA}
\renewcommand{\leq}{\leqslant}
\renewcommand{\geq}{\geqslant}
\title{The Bishop-Phelps-Bollob\'as property and absolute sums}
\author[Choi]{Yun Sung Choi}
\address[Choi]{Department of Mathematics, POSTECH, Pohang 790-784, Republic of Korea \newline
	\href{http://orcid.org/0000-0000-0000-0000}{}}
\email{\texttt{mathchoi@postech.ac.kr}}
\author[Dantas]{Sheldon Dantas}
\address[Dantas]{Department of Mathematics, Faculty of Electrical Engineering, Czech Technical University in Prague, Technick\'a 2, 166 27, Prague 6, Czech Republic \newline
\href{http://orcid.org/0000-0001-8117-3760}{ORCID: \texttt{0000-0001-8117-3760}  }}
\email{\texttt{gildashe@fel.cvut.cz}}
\author[Jung]{Mingu Jung}
\address[Jung]{Department of Mathematics, POSTECH, Pohang 790-784, Republic of Korea \newline
	\href{http://orcid.org/0000-0003-2240-2855}{ORCID: \texttt{0000-0003-2240-2855}  }}
\email{\texttt{jmingoo@postech.ac.kr}}
\author[Mart\'{\i}n]{Miguel Mart\'{\i}n}
\address[Mart\'{\i}n]{Departamento de An\'{a}lisis Matem\'{a}tico, Facultad de
	Ciencias, Universidad de Granada, 18071 Granada, Spain \newline
	\href{http://orcid.org/0000-0003-4502-798X}{ORCID: \texttt{0000-0003-4502-798X} }
}
\email{\texttt{mmartins@ugr.es}}
\thanks{The first author was supported by Basic Science Research Program through the National Research Foundation of Korea(NRF) funded by the Ministry of Education (NRF-2015R1D1A1A09059788). The second author was supported by the project OPVVV CAAS CZ.02.1.01/0.0/0.0/16\_019/0000778, Centrum pokro\v{c}il\'ych aplikovan\'ych p\v{r}\'irodn\'ich v\v{e}d (Center for Advanced Applied Science) and by Pohang Mathematics Institute (PMI), POSTECH, Korea and Basic Science Research Program through the National Research Foundation of Korea (NRF) funded by the Ministry of Education, Science and Technology (NRF-2015R1D1A1A09059788). Last author partially supported by Spanish MINECO/FEDER grant MTM2015-65020-P}
\subjclass[2010]{Primary: 46B04;  Secondary: 46B20, 46E40, 47A12}
\date{June 19th, 2018}
\keywords{Bishop-Phelps theorem, Bishop-Phelps-Bollob\'as property, norm attaining operators, absolute sums}
\begin{document}
	
\begin{abstract}
In this paper we study conditions assuring that the Bishop-Phelps-Bollob\'{a}s property (BPBp, for short) is inherited by absolute summands of the range space or of the domain space. Concretely, given a pair $(X,Y)$ of Banach spaces having the BPBp,
\begin{itemize}
\item[(a)] if $Y_1$ is an absolute summand of $Y$, then $(X,Y_1)$ has the BPBp;
\item[(b)] if $X_1$ is an absolute summand of $X$ of type $1$ or $\infty$, then $(X_1,Y)$ has the BPBp.
\end{itemize}
Besides, analogous results for the BPBp for compact operators and for the density of norm attaining operators are also given. We also show that the Bishop-Phelps-Bollob\'{a}s property for numerical radius is inherited by absolute summands of type $1$ or $\infty$. Moreover, we provide analogous results for numerical radius attaining operators and for the BPBp for numerical radius for compact operators.
\end{abstract}

\maketitle

\section{Introduction \& Preliminaries}

Let $X$ be a Banach space. We denote by $B_X$ and $S_X$ the unit ball and the unit sphere of $X$, respectively. We consider the topological dual space of $X$ and we denote it by $X^*$. We say that $x^* \in X^*$ {\it attains its norm} if there is $x_0 \in S_X$ such that $|x^*(x_0)| = \|x^*\| = \sup_{x \in S_X} |x^*(x)|$. The famous Bishop-Phelps theorem \cite{BP} says that given $\eps > 0$ and $x^* \in X^*$, there exists $x_0^* \in X^*$ such that $|x_0^*(x_0)| = \| x_0^* \|$ for some $x_0 \in S_X$ and $\|x_0^* - x^*\| < \eps$. It is natural to ask if it is true also for bounded linear operators. Given two Banach spaces $X$ and $Y$, we denote by $\mathcal{L}(X, Y)$ the set of all continuous linear operators. When $Y = X$, we denote it simply by $\mathcal{L}(X)$. We say that $T \in \mathcal{L}(X, Y)$ {\it attains its norm} when there exists $x_0 \in S_X$ such that $\|Tx_0\| = \|T\| = \sup_{x \in S_X} \|T(x)\|$. We denote by $\NA(X, Y)$ the set of all norm attaining operators from $X$ to $Y$. Then, the Bishop-Phelps theorem states that $\NA(X,\K)$ is dense in $X^*$ for every Banach space $X$ (where $\K$ denotes the base field ($=\R$ or $\C$)). Trying to extend the Bishop-Phelps theorem for bounded linear operators, J.~Lindenstrauss \cite{L} showed that there are operators which can not be approximated by norm attaining ones. Therefore, in general, there is no version of the Bishop-Phelps theorem for operators. On the other hand, if $X$ is reflexive, then $\NA(X,Y)$ is dense in $\mathcal{L}(X,Y)$ for every Banach space $Y$ (actually, this holds for Banach spaces $X$ with the Radon-Nikod\'{y}m property by a result of J.~Bourgain \cite{Bour}); if $Y$ is a closed subspace of $\ell_\infty$ containing the canonical copy of $c_0$, then $\NA(X,Y)$ is dense in $\mathcal{L}(X,Y)$ for every Banach space $X$. We refer the reader to the survey \cite{Acosta-RACSAM} for a detailed account on norm attaining operators.

In 1970, Bollob\'as \cite{Bol} proved a quantitative version of the Bishop-Phelps theorem which turned out to be very useful in numerical range theory. Nowadays, this result is known as the Bishop-Phelps-Bollob\'as theorem. It can be enunciated as follows. Let $X$ be a Banach space, let $0<\eps <2$ and suppose that $x \in B_X$ and $x^* \in B_{X^*}$ satisfy $\re x^*(x) > 1 - \frac{\eps^2}{2}$. Then, there are $y \in S_X$ and $y^* \in S_{X^*}$ such that $y^*(y) = 1$, $\|y - x\| < \eps$, and $\|y^* - x^*\| < \eps$  (see \cite{CKMM} for this slightly improved version of the original one \cite{Bol}).

This result motivated M.~Acosta, R.~Aron, D.~Garc\'ia and M.~Maestre \cite{AAGM} to introduce in 2008 the following property.

\begin{definition}[\mbox{\cite[Definition 1.1]{AAGM}}]
A pair $(X, Y)$ of Banach spaces has the {\it Bishop-Phelps-Bollob\'as property} ({\it BPBp}, for short) if given $\eps > 0$, there exists $\eta(\eps) > 0$ such that whenever $T \in \mathcal{L}(X, Y)$ with $\|T\| = 1$ and $x_0 \in S_X$ are such that
	\begin{equation*}
	\|Tx_0\| > 1 - \eta(\eps),
	\end{equation*}
	there are $S \in \mathcal{L}(X, Y)$ with $\|S\| = 1$ and $x_1 \in S_X$ such that
	\begin{equation*}
	\|Sx_1\| = 1, \ \ \ \|x_1 - x_0\| < \eps, \ \ \ \text{and} \ \ \ \|S - T\| < \eps.
	\end{equation*}
In this case, we say that the pair $(X, Y)$ has the BPBp with the function $\eps \longmapsto \eta(\eps)$.

If we restrict the operators $T$ and $S$ to be compact in the above definition, then the corresponding property is called the {\it Bishop-Phelps-Bollob\'{a}s property for compact operators} ({\it BPBp for compact operators}, for short) (see \cite{DGMM}).
\end{definition}

The aim of the authors of \cite{AAGM} was to study the conditions that $X$ and $Y$ must satisfy to get a Bishop-Phelps-Bollob\'as type theorem for bounded linear operators. They characterized when the pair $(\ell_1,Y)$ has the BPBp via a geometric property of the Banach space $Y$ which is satisfied by many Banach spaces as $C(K)$, $L_1(\mu)$, but not for all Banach spaces. They also proved that $(X, Y)$ has the BPBp when $X$ and $Y$ are finite-dimensional, or when $X$ is arbitrary and $Y$ is a closed subspace of $\ell_\infty$ containing the canonical copy of $c_0$. There is a vast literature about this topic and we invite the reader to take a look at the papers cited here and the references therein, as the already cited \cite{AAGM} and \cite{AMS, ACKLM,Cho-Choi,DGMM,DKL,DKKLM,KimLeeMartin-JMAA2015}.

In this paper we are interested in the behavior of the Bishop-Phelps-Bollob\'as property and other related properties with respect to absolute sums. Our main motivation is the similar study done in \cite{ACKLM} for $c_0$-, $\ell_1$- and $\ell_\infty$-sums, which was a very useful technique to produce some important results and examples.

Before we continue, let us give the proper terminology and notation.

An {\it absolute norm} is a norm $| \cdot |_a$ in $\R^2$ such that $|(1, 0)|_a = |(0, 1)|_a = 1$ and $|(s, t)|_a = |(|s|, |t|)|_a$ for every $s, t \in \R$. Given two Banach spaces $W$ and $Z$ and an absolute norm $| \cdot |_a$, the {\it absolute sum} of $W$ and $Z$ with respect to $| \cdot |_a$, denoted by $W \oplus_a Z$, is the Banach space $W \times Z$ endowed with the norm
\begin{equation*}
\|(w, z)\|_a = |(\|w\|, \|z\|)|_a \ \ \ (w \in W, z \in Z).
\end{equation*}
It is immediate to see that $\|(w, 0)\|_a = \|w\|$ for all $w \in W$, so $W$ is isometric to the subspace $\{(w,0)\colon w\in W\}$ of $W \oplus_a Z$. It is also easy to show that
\begin{equation} \label{ine1}
\max\{\|w\|,\|z\|\} \leq \| (w,z) \|_a \leq \|w\|+\|z\|
\end{equation}
for every $(w,z)\in W\oplus_a Z$ and every absolute norm $\| \cdot \|_a$. We will say that the Banach space $W$ is an {\it absolute summand} of the Banach space $X$, if there are another Banach space $Z$ and an absolute norm $| \cdot |_a$ in $\R^2$ such that $X = W \oplus_a Z$.  This terminology extends the well-known concepts of $L$-summand and $M$-summand (see \cite{HWW}): $W$ is a {\it $L$-summand} of $X$ if there is another Banach space $Z$ such that $X = W \oplus_1 Z$; analogously, if $X = W \oplus_{\infty} Z$ for some Banach space $Z$, then we say that $W$ is a {\it $M$-summand} of $X$. For background on absolute norms and absolute sums, we refer the reader to \cite{Bonsall1,MPR1,MPR2, MPR3, Paya, Paya1}. For a more recent reference, we suggest \cite{Hardtke}, where the author studies the stability of some geometrical properties of Banach spaces by absolute sums. Examples of absolute sums are the $\ell_p$-sums for $1 \leq p \leq \infty$ associated to the $\ell_p$-norms in $\R^2$.

In his doctoral dissertation \cite{Paya}, R.~Pay\'a proposed an intuitive classification of absolute norms defined through its behavior at the unit vector $(1, 0)$ of $\R^2$ (see also \cite[p.~38]{MPR1}). Some of our results depend on this classification, so we include it here. Let us first recall some necessary definitions. For $x \in X$, let $D(X, x)$ be the set of all $x^* \in S_{X^*}$ such that $x^*(x) = \|x\|$, which is convex and nonempty by the Hahn-Banach theorem. We say that $x \in S_X$ is a {\it vertex} of $B_X$ if $D(X, x)$ separates the points of $X$ and we say that $x$ is a {\it smooth point} of $B_X$ if $D(X, x)$ is a singleton subset of $X^*$. A vertex of the unit ball is an extreme point (see, for example, the remark after \cite[Theorem 4.6]{Bonsall0}).

\begin{definition} Let $| \cdot |_a$ be an absolute norm in $\R^2$. We say that $| \cdot |_a$ is of
	\begin{itemize}
		\item[(i)] {\it type $1$} if the vector $(1, 0)$ is an vertex of $B_{(\R^2,\|\cdot\|_a)}$;
		\item[(ii)] {\it type $2$} if the vector $(1, 0)$ is a smooth and extreme point of $B_{(\R^2,\|\cdot\|_a)}$;
		\item[(iii)] {\it type $\infty$} if the vector $(1, 0)$ is not extreme point of $B_{(\R^2,\|\cdot\|_a)}$.
	\end{itemize}
\end{definition}
The $\ell_p$-norm is of type $1$ for $p=1$, of type $\infty$ for $p=\infty$, and of type $2$ for $1<p<\infty$. In subsection \ref{subsection:absolute-sum}, at the end of this introduction, we will give an account on the results of absolute sums that we will need in this paper.

In section \ref{range} we show that if $Y_1$ is an absolute summand of $Y$ and a pair $(X,Y)$ has the BPBp (resp.\ BPBp for compact operators), then so does $(X,Y_1)$. The analogous result for the density of norm attaining operators and norm attaining compact operators also hold. For domain spaces, we show in section \ref{domain} that analogous results hold for type $1$ and type $\infty$ absolute norms: if $X_1$ is an absolute summand of type $1$ or $\infty$ of a Banach space $X$ and a pair $(X,Y)$ has the BPBp (resp.\ BPBp for compact operators), then so does $(X_1,Y)$. The corresponding results for the density of norm attaining operators and norm attaining compact operators also hold for type $1$ absolute norms.

The last section of the paper (\S \ref{numericalradius}) is devoted to the study of the Bishop-Phelps-Bollob\'{a}s property for numerical radius. Let us recall the relevant notation and terminology about this. Let $X$ be a Banach space and consider the set
\begin{equation*}
\Pi(X) := \{ (x, x^*) \in S_X \times S_{X^*}\colon x^*(x) = 1 \}.	
\end{equation*}
The {\it numerical radius} of an operator $T \in \mathcal{L}(X)$ is defined by
\begin{equation*}
v(T) := \sup \{ |x^*(Tx)|\colon (x, x^*) \in \Pi(X) \}.	
\end{equation*}
It is clear that $v(T) \leq \|T\|$ for all $T \in \mathcal{L}(X)$ and that $v(\cdot)$ is a seminorm in $\mathcal{L}(X)$. We say that $T \in \mathcal{L}(X)$ {\it attains its numerical radius} (or it is a {\it numerical radius attaining operator}) if there is $(x_0, x_0^*) \in \Pi(X)$ such that $|x_0^*(Tx_0)| = v(T)$. We denote by $\NRA(X)$ the set of all numerical radius attaining operators on $X$.
We refer the reader to the classical books \cite{Bonsall0,Bonsall1} for background on numerical radius of operators and to \cite{Acosta-RACSAM,CapMM,Paya1} and the references therein for background on the study of the density of the set of numerical radius attaining operators.

Let us give the definition of two properties related to the Bishop-Phelps-Bollob\'{a}s property. We take the definitions from \cite{KLM1} although they had appeared earlier for concrete Banach spaces (see \cite{GK}). We refer to \cite{AcoFakSole,AGR,GK, KLM1,KLM} and references therein for background.

\begin{definition} \label{defBPBp-nu}
Let $X$ be a Banach space. We say that
\begin{itemize}
\item[(a)] $X$ has the \emph{BPBp for numerical radius} (\emph{BPBp-nu}, for short) if given $\eps>0$, there is $\eta(\eps) > 0$ such that whenever $T \in \mathcal{L}(X)$ with $v(T) = 1$ and $(x, x^*) \in \Pi(X)$ satisfy
\begin{equation*}
|x^*(Tx)| > 1 - \eta(\eps),
\end{equation*}
there are $S \in \mathcal{L}(X)$ with $v(S) = 1$ and $(x_0, x_0^*) \in \Pi(X)$ such that
\begin{equation*}
|x_0^*(Sx_0)| = 1, \ \ \ \|x_0^*-x^*\| < \eps, \ \ \ \|x_0 - x\| < \eps, \ \ \ \text{and} \ \ \ \|S - T\| < \eps.
\end{equation*}
\item[(b)] $X$ has the {\it weak BPBp for numerical radius} (\emph{weak BPBp-nu}, for short) if given $\eps>0$, there is $\eta(\eps) > 0$ such that whenever $T \in \mathcal{L}(X)$ with $v(T) = 1$ and $(x, x^*) \in \Pi(X)$ satisfy
\begin{equation*}
|x^*(Tx)| > 1 - \eta(\eps),
\end{equation*}
there are $S \in \mathcal{L}(X)$ and $(x_0, x_0^*) \in \Pi(X)$ such that
\begin{equation*}
|x_0^*(Sx_0)| = v(S), \ \ \ \|x_0^*-x^*\| < \eps, \ \ \ \|x_0 - x\| < \eps, \ \ \ \text{and} \ \ \ \|S - T\| < \eps.
\end{equation*}
\end{itemize}
\end{definition}

Observe that the only difference between the BPBp-nu and the weak BPBp-nu is the normalization of the numerical radius of the operator $S$ given in the first definition. Both properties imply the density of the set of numerical radius attaining operators (see Lemma \ref{lemmaNRA}). When $v(\cdot)$ is a norm, equivalent to the operator norm (and actually in more situations, see \cite{KLM1, KLM2}), both properties are equivalent. As far as we know, it is not known whether both properties are always equivalent. Let us also say that both properties have their corresponding versions for compact operators, defined in the obvious way.

In section \ref{numericalradius} we will show that if $X$ is a Banach space with the BPBp-nu and $W$ is an absolute summand of type $1$ or $\infty$ of $X$, then $W$ has the BPBp-nu. The analogous result for the weak BPBp-nu, for the BPBp-nu for compact operators, and for the weak BPBp-nu for compact operators also hold. Furthermore, we show that if $X$ is a Banach space such that $\NRA(X)$ is dense in $\mathcal{L}(X)$ and $W$ is an absolute summand of type $1$ or $\infty$ of $X$, then $\NRA(W)$ is dense in $\mathcal{L}(W)$. The same result holds for compact operators.

Let us finally say that some of our results were previously known for the particular case of $L$-summands and/or $M$-summands, but other ones are new even in this context. We will highlight in the main part of the paper of which kind is each result. Let us also mention that for $L$-summands of the domain and for $M$-summands of the range, there is a formula for the norm of the operators involving the norms of the restrictions or projections (see the proof of \cite[Lemma 2]{PS}, for instance) which makes things easier. This is no longer true for arbitrary absolute summands.

\subsection{Some background on absolute sums}\label{subsection:absolute-sum}
Let us recall some known facts on absolute sums which will be relevant in our discussion. Let $W$, $Z$ be Banach spaces and let $\|\cdot\|_a$ be an absolute norm. There exists an isometric isomorphism between $[W \oplus_a Z]^*$ and $W^* \oplus_{a^*} Z^*$, where $| \cdot |_{a^*}$ is the dual norm associated to $| \cdot |_a$, which is also absolute. The action of a functional $(w^*, z^*) \in W^* \oplus_{a^*} Z^*$ at a point $(w, z) \in W \oplus_a Z$ is given by
\begin{equation*}
\langle (w, z), (w^*, z^*) \rangle = w^*(w) + z^*(z).
\end{equation*}

We will profusely use the following useful results which were proved in \cite{Paya}.

\begin{lemma}[\mbox{\cite[Propositions 5.3, 5.5, and 5.6]{Paya}}]\label{charac} Let $| \cdot |_a$ be an absolute norm in $\R^2$. Then,
	\begin{itemize}
		\item[(a)] $| \cdot |_a$ is of type $1$ if and only if there exists $K > 0$ such that $|x| + K |y| \leq |(x, y)|_a$ for every $x,y\in \R$.
		\item[(b)] $|\cdot |_a$ is of type $\infty$ if and only if there exists $b_0 > 0$ such that $|(1, b_0)|_a = 1$ (so, $|(1,b)|_a=1$, $\forall b\leq b_0$).
		\item[(c)] $| \cdot |_a$ is of type $1$ if and only $| \cdot |_{a^*}$ is of type $\infty$.
		\item[(d)] $| \cdot |_a$ is of type $\infty$ if and only $| \cdot |_{a^*}$ is of type $1$.
	\end{itemize}
\end{lemma}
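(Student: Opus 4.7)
The plan is to translate each geometric condition on $(1,0)$ into an arithmetic condition on the norms $\|(1,b)\|_a$ or $\|(1,b)\|_{a^*}$, exploiting the symmetry $\|(s,t)\|_a=\|(|s|,|t|)\|_a$ of absolute norms together with the fact that $D(\R^2,(1,0))=\{(1,b):\|(1,b)\|_{a^*}=1\}$ lies on the one-dimensional affine line $\{1\}\times\R$. Once (a) and (b) are in hand, parts (c) and (d) drop out immediately by biduality.

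For (a), I would first observe that $D(\R^2,(1,0))$ always contains $(1,0)$ and is closed under $(1,b)\mapsto(1,-b)$ by absolute symmetry of $\|\cdot\|_{a^*}$. Since it sits on an affine line, it spans $\R^2$ (equivalently, separates points) if and only if it contains some $(1,K)$ with $K\neq 0$, and after symmetry we may assume $K>0$ and obtain both $(1,K)$ and $(1,-K)$ in $D$. From $\|(1,\pm K)\|_{a^*}\leq 1$ we read off $|x\pm Ky|\leq\|(x,y)\|_a$, and taking the maximum of the four expressions $\pm x\pm Ky$ yields the desired inequality $|x|+K|y|\leq\|(x,y)\|_a$. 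The converse is routine: this inequality forces $\|(1,\pm K)\|_{a^*}\leq 1$, which combined with $\|(1,\pm K)\|_{a^*}\geq 1$ from \eqref{ine1} applied to $\|\cdot\|_{a^*}$ places two distinct elements of $D$ on the line $\{1\}\times\R$, witnessing that $(1,0)$ is a vertex.

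For (b), the implication ($\Leftarrow$) is immediate because absolute symmetry gives $\|(1,-b_0)\|_a=\|(1,b_0)\|_a=1$, making $(1,0)$ the midpoint of two distinct unit vectors; the parenthetical clause follows by writing $(1,b)$ as a convex combination of $(1,0)$ and $(1,\pm b_0)$ and applying \eqref{ine1} once more. For ($\Rightarrow$), I would write $(1,0)=\tfrac{1}{2}((x_1,y_1)+(x_2,y_2))$ with $\|(x_i,y_i)\|_a\leq 1$ and $(x_1,y_1)\neq(x_2,y_2)$; then $|x_i|\leq 1$ from \eqref{ine1} forces $x_1=x_2=1$, hence $y_1=-y_2\neq 0$, and $b_0:=|y_1|$ works. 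Finally, (c) and (d) are equivalent since $(\|\cdot\|_{a^*})^*=\|\cdot\|_a$, so it suffices to prove (c): by (a), type $1$ for $\|\cdot\|_a$ is equivalent to existence of $K>0$ with $\|(1,K)\|_{a^*}\leq 1$, which combined with $\|(1,K)\|_{a^*}\geq 1$ from \eqref{ine1} is exactly $\|(1,K)\|_{a^*}=1$, and by (b) applied to $\|\cdot\|_{a^*}$ this is precisely type $\infty$ for $\|\cdot\|_{a^*}$. The main obstacle is the ($\Rightarrow$) direction of (a): converting the abstract vertex condition into the concrete pair of norming functionals $(1,\pm K)$ is the one step requiring care, after which everything else is bookkeeping with \eqref{ine1} and absolute symmetry.
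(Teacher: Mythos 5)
Your proposal is correct, and every step checks out: the identification $D\bigl(\R^2,(1,0)\bigr)=\{(1,b)\colon |(1,b)|_{a^*}=1\}$ is right (the first coordinate of any norming functional must equal $1$), the symmetry of the dual absolute norm does give closure under $(1,b)\mapsto(1,-b)$, the identity $\max\{|x+Ky|,|x-Ky|\}=|x|+K|y|$ legitimately converts the two norming functionals $(1,\pm K)$ into the inequality of part (a), and in the converse direction the lower bound $|(1,\pm K)|_{a^*}\geq 1$ from \eqref{ine1} (applied to $|\cdot|_{a^*}$, which is again absolute) upgrades the inequality to membership in $D$, whose two linearly independent elements indeed separate points. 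The argument for (b) via the midpoint characterization of extreme points and the coordinate bound $|x_i|\leq 1$ is sound, as is the derivation of (c) from (a) and (b) through $|(1,K)|_{a^*}=\sup_{|(x,y)|_a\leq 1}\bigl(|x|+K|y|\bigr)$, and of (d) from (c) by reflexivity of $\R^2$.

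Note that the paper itself offers no proof of this lemma: it is quoted from Propositions 5.3, 5.5 and 5.6 of Pay\'a's dissertation \cite{Paya}, so there is no internal argument to compare against. Your write-up is a self-contained, elementary substitute for that citation, using only the duality pairing, the absolute symmetry, and the bounds \eqref{ine1}; the one point where Pay\'a's treatment is more general is that his propositions are formulated within a broader classification of absolute norms (including type $2$ and semisummand structure), whereas your argument is tailored exactly to the three equivalences stated here, which is all the present paper needs.
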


Finally, we state the following easy result (for its proof see, for example, \cite[Lemma 2.2]{G}).

\begin{lemma} \label{state0} Let $W$ and $Z$ be Banach spaces and $\oplus_a$ be any absolute sum in $\R^2$. If $(w, z) \in S_{W \oplus_a Z}$ and $(w^*, z^*) \in S_{W^* \oplus_{a^*} Z^*}$ are such that 
\begin{equation*}
\langle (w, z), (w^*, z^*) \rangle = 1,
\end{equation*}
then
\begin{equation*}	
w^*(w) = \|w^*\|\|w\| \ \ \ \text{and} \ \ \ z^*(z) = \|z^*\| \|z\|.
\end{equation*}
\end{lemma}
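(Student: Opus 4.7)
The plan is to squeeze a chain of inequalities between $1$ and $1$, forcing every intermediate inequality to be an equality. Concretely, starting from the hypothesis $w^*(w)+z^*(z)=\langle (w,z),(w^*,z^*)\rangle=1$, I would write
\begin{equation*}
1 = w^*(w)+z^*(z) \leq |w^*(w)|+|z^*(z)| \leq \|w^*\|\|w\|+\|z^*\|\|z\|,
\end{equation*}
where the second inequality is simply the Cauchy--Schwarz-type pointwise bound $|w^*(w)|\leq \|w^*\|\|w\|$ applied in each coordinate.

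The second step is to bound the last expression by $1$ from above. For this I would invoke the defining duality between $|\cdot|_a$ and $|\cdot|_{a^*}$ on $\R^2$: for any $(\alpha,\beta),(\alpha^*,\beta^*)\in\R^2$ one has $|\alpha\alpha^*+\beta\beta^*|\leq |(\alpha,\beta)|_a\,|(\alpha^*,\beta^*)|_{a^*}$. Applied with the nonnegative pairs $(\|w\|,\|z\|)$ and $(\|w^*\|,\|z^*\|)$, and using that $(w,z)\in S_{W\oplus_a Z}$ and $(w^*,z^*)\in S_{W^*\oplus_{a^*}Z^*}$, this yields
\begin{equation*}
\|w^*\|\|w\|+\|z^*\|\|z\| \leq |(\|w\|,\|z\|)|_a\,|(\|w^*\|,\|z^*\|)|_{a^*} = 1.
\end{equation*}

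Combining the two displays, every inequality in the chain is an equality. From $w^*(w)+z^*(z)=|w^*(w)|+|z^*(z)|$ together with $w^*(w)\leq|w^*(w)|$ and $z^*(z)\leq|z^*(z)|$ I conclude $w^*(w)=|w^*(w)|$ and $z^*(z)=|z^*(z)|$; from $|w^*(w)|+|z^*(z)|=\|w^*\|\|w\|+\|z^*\|\|z\|$ together with the two pointwise bounds I conclude $|w^*(w)|=\|w^*\|\|w\|$ and $|z^*(z)|=\|z^*\|\|z\|$. Putting these together gives the claimed identities.

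The proof is essentially a ``termwise equality in a sum of inequalities'' argument, so there is no real obstacle; the only ingredient that is not elementary is the duality inequality for absolute norms on $\R^2$, which is a standard fact (and is the reason the dual of $W\oplus_a Z$ is identified with $W^*\oplus_{a^*}Z^*$ in the first place).
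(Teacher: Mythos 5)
Your proof is correct, and it is the standard argument: the paper does not prove this lemma itself but refers to \cite[Lemma 2.2]{G}, where exactly this kind of equality-chain via the duality inequality $\alpha\alpha^*+\beta\beta^*\leq |(\alpha,\beta)|_a\,|(\alpha^*,\beta^*)|_{a^*}$ is used. The only point to polish is the complex case ($\K=\C$): since $w^*(w)$ and $z^*(z)$ need not be real individually, the first step should read $1=\re w^*(w)+\re z^*(z)\leq |w^*(w)|+|z^*(z)|$, and the forced termwise equalities $\re w^*(w)=|w^*(w)|=\|w^*\|\|w\|$ then give that $w^*(w)$ is real, nonnegative and equal to $\|w^*\|\|w\|$ (likewise for $z$), which is exactly the claim.
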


\section{Results on Range Spaces} \label{range}
We start this section by showing that the BPBp passes from $(X, Y)$ to $(X, Y_1)$, when $Y_1$ is an absolute summand of $Y$. This result extends \cite[Propositions 2.3 and 2.7]{ACKLM}, where the results were done for $L$- and $M$-summands, and \cite[Theorem 2.3]{G}, where it was done for the particular case of $X=\ell_1$.

\begin{theorem} \label{range1} Let $X$, $Y$ be Banach spaces and let $Y_1$ be an absolute summand of $Y$. If the pair $(X, Y)$ has the BPBp, then so does $(X, Y_1)$.
\end{theorem}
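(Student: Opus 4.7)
The plan is to derive the BPBp of $(X,Y_1)$ by lifting the data to $Y$ along the isometric inclusion $i_1\colon Y_1\hookrightarrow Y$, applying the BPBp of $(X,Y)$ there, and then pushing back to $Y_1$ via a rank-one correction of the projected operator dictated by the joint norming identity for absolute sums. Concretely, given $\eps>0$, I would set $\tilde\eps:=\eps/3$ and define $\eta(\eps):=\eta_{(X,Y)}(\tilde\eps)$. Starting from $T_1$ with $\|T_1\|=1$ and $x_0\in S_X$ with $\|T_1x_0\|>1-\eta(\eps)$, I would take $T:=i_1\circ T_1$ (so $\|T\|=1$ and $\|Tx_0\|=\|T_1x_0\|$) and apply the BPBp of $(X,Y)$ to obtain $S\in\mathcal{L}(X,Y)$ of norm one attaining at some $x_1\in S_X$ with $\|S-T\|<\tilde\eps$ and $\|x_1-x_0\|<\tilde\eps$.

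Writing $S_j:=P_jS$ for the norm-one canonical projections $P_j\colon Y\to Y_j$, the fact that $P_2T=0$ together with \eqref{ine1} applied to $Sx_1$ gives $\|S_1-T_1\|<\tilde\eps$, $\|S_2\|<\tilde\eps$, and $\|S_1x_1\|>1-\tilde\eps>0$, so $u_1:=S_1x_1/\|S_1x_1\|\in S_{Y_1}$ is well defined. However, in general $S_1$ attains its norm neither exactly nor at $x_1$, so a bare projection-and-normalisation will not suffice; the absolute-sum structure must enter through the norming functional of $Sx_1$.

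The decisive step is to pick a norming functional $(y_1^*,y_2^*)\in S_{Y^*}=S_{Y_1^*\oplus_{a^*}Y_2^*}$ for $Sx_1$ and invoke Lemma~\ref{state0} to get the joint identity $y_i^*(S_ix_1)=\|y_i^*\|\,\|S_ix_1\|$ for $i=1,2$. Writing $\sigma:=\|y_1^*\|$ and $\tau:=\|y_2^*\|$, so that $(\sigma,\tau)\in S_{(\R^2,|\cdot|_{a^*})}$ and $\sigma\|S_1x_1\|+\tau\|S_2x_1\|=1$, I would define
\[
\hat S_1(x):=\sigma\,S_1(x)+y_2^*(S_2x)\,u_1 \qquad (x\in X).
\]
A direct computation yields $\hat S_1(x_1)=u_1\in S_{Y_1}$, and the duality $\sigma s+\tau t\leq |(\sigma,\tau)|_{a^*}\,|(s,t)|_a$ (for $s,t\geq 0$) applied pointwise at $(s,t)=(\|S_1x\|,\|S_2x\|)$ forces $\|\hat S_1x\|\leq \|Sx\|_a\leq 1$, so that $\|\hat S_1\|=1$ is attained at $x_1$.

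The step I expect to be the main obstacle is precisely this one: ensuring that the rank-one correction does not push $\|\hat S_1\|$ above one. It is Lemma~\ref{state0}, combined with the identity $|(\sigma,\tau)|_{a^*}=1$, that makes the argument work for \emph{any} absolute summand; it bypasses the more ad hoc treatments used for $L$- and $M$-summands in \cite{ACKLM}. The remaining estimates are routine: from $\tau\leq 1$ and $\tau\|S_2x_1\|<\tilde\eps$ one gets $1-\sigma<\tilde\eps$, whence $\|\hat S_1-S_1\|<2\tilde\eps$ and $\|\hat S_1-T_1\|<3\tilde\eps\leq\eps$, while $\|x_1-x_0\|<\tilde\eps\leq\eps$, so $(\hat S_1,x_1)$ is the desired witness of the BPBp of $(X,Y_1)$ with the function $\eta$.
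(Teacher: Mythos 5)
Your proposal is correct and takes essentially the same route as the paper: lift $T_1$ to $Y$, apply the BPBp of $(X,Y)$, and repair the projected operator with exactly the same rank-one correction $x\mapsto \|y_1^*\|\,\widetilde S_1x + y_2^*(\widetilde S_2x)\,\widetilde S_1x_1/\|\widetilde S_1x_1\|$, using Lemma~\ref{state0} together with the duality between $|\cdot|_a$ and $|\cdot|_{a^*}$ to get norm one and norm-attainment at $x_1$. Your preliminary rescaling $\tilde\eps=\eps/3$ merely packages the paper's final $3\eps$ estimate, and your direct computation $\hat S_1x_1=u_1$ is an equivalent (slightly cleaner) way of seeing the attainment that the paper obtains by evaluating $y_1^*/\|y_1^*\|$.
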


\begin{proof} Given $\eps \in (0 , 1)$, consider $\eta (\eps) > 0$ to be the BPBp function for the pair $(X, Y)$ and let $Y_2$ be such that $Y = Y_1 \oplus_a Y_2$. Let $T_1 \in \mathcal{L}(X, Y_1)$ with $\|T_1\| = 1$ and $x_0 \in S_X$ be such that
\begin{equation*}
\|T_1x_0\| > 1 - \eta (\eps).
\end{equation*}
Define $\widetilde{T} \in \mathcal{L}(X, Y)$ by $\widetilde{T}(x) = (T_1x, 0)$ for all $x \in X$. Then $\|\widetilde{T}\| = \|T\| = 1$ and
\begin{equation*}
\|\widetilde{T} x_0\|_a = \|(T_1 x_0, 0)\|_a = \|T_1 x_0\| > 1 - \eta (\eps).
\end{equation*}
Since $\|\widetilde{T}\| = 1$, $x_0 \in S_X$ and the pair $(X, Y)$ has the BPBp with $\eta$, there are $\widetilde{S} \in \mathcal{L}(X, Y)$ with $\|\widetilde{S}\| = 1$ and $x_1 \in S_X$ such that 
\begin{equation*}
\| \widetilde{S}x_1\|_a = 1, \ \ \ \|x_1 - x_0\| < \eps \ \ \ \mbox{and} \ \ \ \|\widetilde{S} - \widetilde{T} \| < \eps.
\end{equation*}

Write $\widetilde{S} = (\widetilde{S}_1, \widetilde{S}_2)$, where $\widetilde{S}_j \in \mathcal{L}(X, Y_j)$ for $j = 1, 2$. By using (\ref{ine1}), for all $x \in B_X$, we have
\begin{equation*}
\|(\widetilde{S}_1x - T_1x, \widetilde{S}_2x)\|_{\infty} \leq \|(\widetilde{S}_1x - T_1x, \widetilde{S}_2x)\|_a \leq \|\widetilde{S} - \widetilde{T}\| < \eps.
\end{equation*}
Then, $\|\widetilde{S}_1 - T_1\| < \eps$ and $\|\widetilde{S}_2\| < \eps$. Now we consider $y^* = (y_1^*, y_2^*) \in Y_1^* \oplus_{a^*} Y_2^*$ with $\|y^*\|_{a^*} = 1$ to be such that
\begin{equation*}
1 = \|\widetilde{S}x_1\| = y^*(\widetilde{S}x_1) = y_1^* (\widetilde{S}_1x_1) + y_2^* (\widetilde{S}_2x_1).
\end{equation*}
Lemma \ref{state0} gives that $y_1^* (\widetilde{S}_1x_1) = \|y_1^*\| \|\widetilde{S}_1 x_1\|$ and $y_2^* (\widetilde{S}_2x_1) = \|y_2^*\| \|\widetilde{S}_2 x_1\|$. Since
\begin{equation*}
\|y_1^*\| \| \widetilde{S}_1x_1\| =  y_1^*(\widetilde{S}_1 x_1) = 1 -  y_2^*(\widetilde{S}_2 x_1) \geq 1 - \|\widetilde{S}_2\| > 0,
\end{equation*}
we have that $y_1^* \not= 0$ and $\|\widetilde{S}_1 x_1\| \not= 0$. Define $S \in \mathcal{L}(X, Y_1)$ by
\begin{equation}\label{eq:S1}
S_1(x) := \|y_1^*\| \widetilde{S}_1x + y_2^* (\widetilde{S}_2x) \frac{\widetilde{S}_1x_1}{\|\widetilde{S}_1x_1\|} \ \ (x  \in X).
\end{equation}
Then, for every $x \in B_X$, we have that
\begin{align*}
\|S_1x\| &\leq \|y_1^*\|\|\widetilde{S}_1x\| + \|y_2^*\|\|\widetilde{S}_2x\| \\
&= \langle ( \| \widetilde{S}_1x\|, \|\widetilde{S}_2x\|), ( \|y_1^*\|, \|y_2^*\|) \rangle \\
&\leq | ( \| \widetilde{S}_1x\|, \| \widetilde{S}_2x\|)|_a | (\|y_1^*\|, \|y_2^*\|)|_{a^*} \\
&= \| (\widetilde{S}_1x, \widetilde{S}_2x)\|_a \|(y_1^*, y_2^*)\|_{a^*} \\
&= \|\widetilde{S}x\|_a \|(y_1^*, y_2^*)\|_{a^*} \leq \|\widetilde{S}\| \|y^*\|_{a^*} = 1.
\end{align*}
So, $\|S_1\| \leq 1$. On the other hand,
\begin{equation*}
\|S_1x_1\| \geq \frac{y_1^*}{\|y_1^*\|} \left(\|y_1^*\| \widetilde{S}_1x_1 + y_2^*(\widetilde{S}_2x_1) \frac{\widetilde{S}_1x_1}{\|\widetilde{S}_1x_1\|} \right)
= y_1^*(\widetilde{S}_1x_1) +  y_2^*(\widetilde{S}_2x_1) = 1.
\end{equation*}
This shows that $\|S_1\| = \|S_1x_1\| = 1$. It remains to prove that $\|S_1 - T_1\| < \eps$. Indeed, since
\begin{equation*}
1 - \|y_1^*\| \leq 1 -  y_1^*(\widetilde{S}_1x_1) =  y_2^*(\widetilde{S}_2x_1) \leq \|\widetilde{S}_2\| < \eps,
\end{equation*}
we have for all $x \in B_X$ that
\begin{equation*}
\|S_1x - T_1x\| \leq \| \|y_1^*\| \widetilde{S}_1x - \widetilde{S}_1x\| + \|\widetilde{S}_1 - T_1\| + \|S_2\| < 3 \eps
\end{equation*}
Therefore, $\|S_1 - T_1\| < 3 \eps$. Since we already have $\|x_1 - x_0\| < \eps$, we conclude that the pair $(X, Y_1)$ has the BPBp as desired.
\end{proof}

There is a property related to the BPBp for which we may also give an analogous result. A  pair $(X, Y)$ of Banach spaces has the \emph{pointwise BPB property} (see \cite[Definition 1.2]{DKL} or \cite[Definition 1.1]{DKKLM}) if given $\eps > 0$, there is $\eta(\eps) > 0$ such that whenever $T \in \mathcal{L}(X, Y)$ with $\|T\| = 1$ and $x_0 \in S_X$ satisfy $\|Tx_0\| > 1 - \eta(\eps)$, there is $S \in \mathcal{L}(X, Y)$ such that
$$
\|S\|=\|Sx_0\| = 1 \quad \text{ and } \quad \|S - T\| < \eps.
$$
It is clear that this property is stronger than the BPBp. It was proved in \cite{DKL} that if $(X, Y)$ has the pointwise BPB property for some $Y$, then the space $X$ must be uniformly smooth. The proof of Theorem \ref{range1} can be obviously adapted to the case of the pointwise BPB property. Therefore, we can state the following result.

\begin{prop} \label{pointwiserange} Let $X$, $Y$ be Banach spaces and let $Y_1$ be an absolute summand of $Y$. If the pair $(X, Y)$ has the pointwise BPB property, then so does $(X, Y_1)$.
\end{prop}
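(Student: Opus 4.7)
The plan is to run through the proof of Theorem~\ref{range1} essentially verbatim, keeping track of the single change imposed by the stronger hypothesis: the point at which the norm is attained is now required to be $x_0$ itself, not some nearby $x_1$. Since the pointwise BPB property gives the stronger conclusion $\|\widetilde S x_0\|_a=1$ with $\|\widetilde S-\widetilde T\|<\eps$ (no perturbation of the point), every occurrence of $x_1$ in the argument of Theorem~\ref{range1} can simply be replaced by $x_0$.

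More concretely, I would begin with $\eps\in(0,1)$, let $\eta(\eps)>0$ be the pointwise BPB function for $(X,Y)$, write $Y=Y_1\oplus_a Y_2$, and let $T_1\in\mathcal L(X,Y_1)$ with $\|T_1\|=1$ and $x_0\in S_X$ satisfy $\|T_1 x_0\|>1-\eta(\eps)$. As before, set $\widetilde T(x)=(T_1 x,0)$, which has $\|\widetilde T\|=1$ and $\|\widetilde T x_0\|_a=\|T_1 x_0\|>1-\eta(\eps)$. Apply the pointwise BPB property to obtain $\widetilde S\in\mathcal L(X,Y)$ with
\begin{equation*}
\|\widetilde S\|=\|\widetilde S x_0\|_a=1\qquad\text{and}\qquad \|\widetilde S-\widetilde T\|<\eps.
\end{equation*}
Decomposing $\widetilde S=(\widetilde S_1,\widetilde S_2)$ with $\widetilde S_j\in\mathcal L(X,Y_j)$, inequality~\eqref{ine1} applied componentwise yields $\|\widetilde S_1-T_1\|<\eps$ and $\|\widetilde S_2\|<\eps$, exactly as in Theorem~\ref{range1}.

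Next, pick $y^*=(y_1^*,y_2^*)\in S_{Y_1^*\oplus_{a^*}Y_2^*}$ with $y^*(\widetilde S x_0)=1$. Lemma~\ref{state0} produces $y_j^*(\widetilde S_j x_0)=\|y_j^*\|\,\|\widetilde S_j x_0\|$ for $j=1,2$, and the chain
\begin{equation*}
\|y_1^*\|\,\|\widetilde S_1 x_0\|=y_1^*(\widetilde S_1 x_0)=1-y_2^*(\widetilde S_2 x_0)\geq 1-\|\widetilde S_2\|>0
\end{equation*}
shows that $y_1^*\neq 0$ and $\widetilde S_1 x_0\neq 0$. I then define, in analogy with~\eqref{eq:S1},
\begin{equation*}
S_1(x):=\|y_1^*\|\,\widetilde S_1 x+y_2^*(\widetilde S_2 x)\,\frac{\widetilde S_1 x_0}{\|\widetilde S_1 x_0\|}\qquad(x\in X),
\end{equation*}
noting the crucial replacement of $\widetilde S_1 x_1$ by $\widetilde S_1 x_0$ in the second summand.

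The remaining verifications are the same bookkeeping as in Theorem~\ref{range1}: the estimate $\|S_1\|\leq 1$ follows from H\"older's inequality between $|\cdot|_a$ and $|\cdot|_{a^*}$, the lower bound $\|S_1 x_0\|\geq y_1^*(\widetilde S_1 x_0)/\|y_1^*\|\cdot\|y_1^*\|+y_2^*(\widetilde S_2 x_0)=1$ tested against $y_1^*/\|y_1^*\|$ gives $\|S_1\|=\|S_1 x_0\|=1$, and the inequality $1-\|y_1^*\|\leq \|\widetilde S_2\|<\eps$ together with $\|\widetilde S_1-T_1\|<\eps$ yields $\|S_1-T_1\|<3\eps$ on $B_X$. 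Since we can start from $\eps/3$ instead of $\eps$, this gives the pointwise BPB property for $(X,Y_1)$. The proof presents no real obstacle beyond Theorem~\ref{range1}; the only point worth emphasising is that using $x_0$ in the definition of $S_1$ (in place of the perturbed point $x_1$) is precisely what upgrades the conclusion from the BPBp to its pointwise version.
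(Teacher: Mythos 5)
Your proposal is correct and follows exactly the route the paper intends: it is the adaptation of the proof of Theorem~\ref{range1} in which the point $x_1$ is replaced throughout by $x_0$, using the pointwise hypothesis to get $\|\widetilde S\|=\|\widetilde S x_0\|_a=1$ and defining $S_1$ as in \eqref{eq:S1} with $\widetilde S_1 x_0$ in the correction term. The bookkeeping ($\|S_1\|=\|S_1x_0\|=1$, $\|S_1-T_1\|<3\eps$, then rescaling $\eps$) matches the paper's argument, so no further comment is needed.
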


We would like to notice also that, in Theorem \ref{range1}, if one starts with a compact operator $T_1: X \longrightarrow Y_1$ and assume that the pair $(X, Y)$ has the BPBp for compact operators, then the operator $\widetilde{T}$ defined in the proof is compact, so we can continue the proof getting a compact operator $\widetilde{S}$ and, therefore, the operator $S_1: X \longrightarrow Y_1$ defined in \eqref{eq:S1} is also compact. Thus, we have the following analogous result for this class of operators. This generalizes \cite[Lemma 2.6.ii]{DGMM}, where the result was enunciated for $L$- and $M$-summands.

\begin{prop} \label{range2} Let $X$, $Y$ be Banach spaces and let $Y_1$ be an absolute summand of $Y$. If the pair $(X, Y)$ has the BPBp for compact operators, then so does $(X, Y_1)$.
\end{prop}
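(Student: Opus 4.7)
The plan is to adapt the proof of Theorem \ref{range1} verbatim, keeping track of compactness at every construction step. Given $\eps \in (0,1)$, I would let $\eta(\eps) > 0$ be the BPBp-for-compact-operators modulus for the pair $(X,Y)$, write $Y = Y_1 \oplus_a Y_2$, and start with a compact $T_1 \in \mathcal{L}(X,Y_1)$ with $\|T_1\|=1$ and $x_0 \in S_X$ satisfying $\|T_1 x_0\| > 1-\eta(\eps)$.

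Define $\widetilde{T} \in \mathcal{L}(X,Y)$ by $\widetilde{T}(x) = (T_1 x, 0)$. Because $\widetilde{T}$ is the composition of $T_1$ with the isometric inclusion $Y_1 \hookrightarrow Y_1 \oplus_a Y_2$, it is compact, has norm one, and satisfies $\|\widetilde{T} x_0\|_a > 1-\eta(\eps)$. Apply the BPBp for compact operators to the pair $(X,Y)$ to obtain a compact operator $\widetilde{S}\in\mathcal{L}(X,Y)$ with $\|\widetilde{S}\|=1$ and $x_1 \in S_X$ such that
\[
\|\widetilde{S} x_1\|_a = 1, \quad \|x_1-x_0\|<\eps, \quad \|\widetilde{S} - \widetilde{T}\| < \eps.
\]
Split $\widetilde{S} = (\widetilde{S}_1, \widetilde{S}_2)$ with $\widetilde{S}_j \in \mathcal{L}(X,Y_j)$; each $\widetilde{S}_j$ is compact, being the composition of $\widetilde{S}$ with the natural continuous projection of $Y_1 \oplus_a Y_2$ onto $Y_j$. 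Just as in Theorem \ref{range1}, pick $(y_1^*,y_2^*) \in S_{Y_1^*\oplus_{a^*}Y_2^*}$ witnessing $\|\widetilde{S} x_1\|_a = 1$, and conclude from Lemma \ref{state0} that $y_1^* \neq 0$ and $\widetilde{S}_1 x_1 \neq 0$.

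Now define $S_1 \in \mathcal{L}(X,Y_1)$ exactly by formula \eqref{eq:S1}. The key observation, which is the only new ingredient beyond Theorem \ref{range1}, is that $S_1$ is compact: the first summand $\|y_1^*\|\,\widetilde{S}_1$ is a scalar multiple of the compact operator $\widetilde{S}_1$, and the second summand $x \mapsto y_2^*(\widetilde{S}_2 x)\,\widetilde{S}_1 x_1/\|\widetilde{S}_1 x_1\|$ has one-dimensional range, hence is compact. Thus $S_1$ lies in the space of compact operators from $X$ to $Y_1$.

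The remaining estimates $\|S_1\| = \|S_1 x_1\| = 1$ and $\|S_1 - T_1\| < 3\eps$ are proved by the identical computation given in Theorem \ref{range1} and require no modification; rescaling the role of $\eps$ at the outset yields the BPBp for compact operators for $(X,Y_1)$. I do not anticipate a genuine obstacle here: the entire argument reduces to the routine verification that embedding into a direct sum, projecting onto a summand, scalar multiplication, and rank-one perturbation all preserve compactness, so every operator appearing in the construction inherits compactness automatically.
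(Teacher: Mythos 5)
Your proposal is correct and is essentially the paper's own argument: the paper proves Proposition \ref{range2} precisely by observing that in the proof of Theorem \ref{range1} the operator $\widetilde{T}$ is compact when $T_1$ is, the BPBp for compact operators then yields a compact $\widetilde{S}$, and the operator $S_1$ of \eqref{eq:S1} is compact, which is exactly your rank-one-perturbation observation. No gaps; the rescaling remark at the end handles the $3\eps$ estimate just as in the original theorem.
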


Let $D$ be a bounded closed convex subset of a Banach space $X$ and let $Y$ be another Banach space. Define $$\|T\|_D := \sup \{\|Tx\|: x \in D\}$$ for every $T \in \mathcal{L}(X, Y)$. In \cite{Cho-Choi} the following version of the BPBp was defined: the pair $(X, Y)$ has the {\it BPBp on $D$} if for every $\eps > 0$, there is $\eta_D (\eps) > 0$ such that whenever $T \in \mathcal{L}(X, Y)$ with $\|T\|_D = 1$ and $x \in D$ satisfy that $\|Tx\| > 1 - \eta_D(\eps)$, there are $S \in \mathcal{L}(X, Y)$ with $\|S\|_D = 1$ and $z \in D$ such that $$\|Sz\| = 1, \qquad \|z - x\| < \eps \quad \text{ and } \quad \|S - T\| < \eps$$ (the distance $\|S - T\|$ being calculated in the usual operator norm). Note that in the proof of Theorem \ref{range1}, we can work with any bounded closed convex subset $D$ of $X$ such that $D \subset B_X$ instead of $B_X$. So, we also obtain the following result which is an extension of \cite[Propositions 4.3 and 4.5]{Cho-Choi}, where the result was done for $L$- and $M$-summands.

\begin{prop} \label{range3} Let $X$, $Y$ be Banach spaces, let $D$ be a bounded closed convex subset of $X$ such that $D \subset B_X$ and let $Y_1$ be an absolute summand of $Y$. If the pair $(X, Y)$ has the BPBp on $D$, then so does $(X, Y_1)$.
\end{prop}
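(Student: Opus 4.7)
The plan is to mimic the proof of Theorem \ref{range1} line by line, with the unit ball $B_X$ replaced by the set $D$ in every step that invokes the BPBp hypothesis on the operator, while preserving the genuine operator norm in the distance estimates. The feasibility of this substitution rests on the two-norm nature of the BPBp-on-$D$ definition: operators are normalised in the seminorm $\|\cdot\|_D$ but the approximation between operators is measured in the full operator norm.

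Concretely, given $\eps \in (0,1)$, let $\eta_D$ be the BPBp-on-$D$ function for $(X,Y)$. For $T_1 \in \mathcal{L}(X,Y_1)$ with $\|T_1\|_D = 1$ and $x_0 \in D$ satisfying $\|T_1 x_0\| > 1 - \eta_D(\eps)$, lift $T_1$ to $\widetilde T(x) = (T_1 x, 0) \in \mathcal{L}(X, Y)$. Since $\|(y_1, 0)\|_a = \|y_1\|$, one has $\|\widetilde T\|_D = 1$ and $\|\widetilde T x_0\|_a > 1 - \eta_D(\eps)$, so the BPBp on $D$ for $(X, Y)$ produces $\widetilde S \in \mathcal{L}(X, Y)$ with $\|\widetilde S\|_D = 1$ together with $z \in D$ satisfying $\|\widetilde S z\|_a = 1$, $\|z - x_0\| < \eps$, and $\|\widetilde S - \widetilde T\| < \eps$ in the usual operator norm. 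Decomposing $\widetilde S = (\widetilde S_1, \widetilde S_2)$, inequality \eqref{ine1} applied to the operator-norm estimate $\|\widetilde S - \widetilde T\| < \eps$ gives $\|\widetilde S_1 - T_1\| < \eps$ and $\|\widetilde S_2\| < \eps$ in operator norm.

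Choose $y^* = (y_1^*, y_2^*) \in S_{Y^*}$ with $\langle y^*, \widetilde S z\rangle = 1$; Lemma \ref{state0} ensures that $y_1^* \neq 0$ and $\widetilde S_1 z \neq 0$, so one may take $S_1 \in \mathcal{L}(X, Y_1)$ to be the operator defined by formula \eqref{eq:S1} with $z$ in place of $x_1$. To check $\|S_1\|_D \leq 1$, one runs the same chain of inequalities as in the proof of Theorem \ref{range1}, the only substitution being that in the last step one uses $\|\widetilde S x\|_a \leq \|\widetilde S\|_D = 1$ for $x \in D$ rather than for $x \in B_X$ with $\|\widetilde S\| = 1$. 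The bound $\|S_1 z\| \geq y_1^*(\widetilde S_1 z) + y_2^*(\widetilde S_2 z) = 1$ is unchanged, giving $\|S_1\|_D = \|S_1 z\| = 1$. Finally, the operator-norm estimate $\|S_1 - T_1\| < 3\eps$ is obtained as in Theorem \ref{range1} from the inequalities $1 - \|y_1^*\| \leq \|\widetilde S_2\| < \eps$, $\|\widetilde S_1 - T_1\| < \eps$, and $\|\widetilde S_2\| < \eps$, which are operator-norm statements unaffected by the restriction to $D$.

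The delicate point that I would watch out for, and the main obstacle to a mechanical transcription, is the clean separation of the two norms in play: the $D$-norm normalisation of $\widetilde S$ controls $\|S_1\|_D$ only when both sides are tested on $D$, while the operator-norm approximation $\|\widetilde S - \widetilde T\| < \eps$ is what ultimately controls $\|S_1 - T_1\|$ on all of $B_X$. Once this bookkeeping is respected, no new analytic input beyond Theorem \ref{range1} is needed, and the pair $(X, Y_1)$ inherits the BPBp on $D$ with essentially the same function $\eta_D$ up to a harmless rescaling by $3$.
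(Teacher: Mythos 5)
Your route is the one the paper itself takes: its proof of this proposition is literally the remark that the argument of Theorem~\ref{range1} can be repeated with $D$ in place of $B_X$, and most of your bookkeeping is sound. The operator-norm estimates $\|\widetilde{S}_1-T_1\|<\eps$ and $\|\widetilde{S}_2\|<\eps$ do follow from $\|\widetilde{S}-\widetilde{T}\|<\eps$ via \eqref{ine1}; the bound $\|S_1\|_D\leq 1$ indeed only needs to be tested on points of $D$ against $\|\widetilde{S}\|_D=1$; and $\|S_1z\|\geq 1$ with $z\in D$ gives $\|S_1\|_D=\|S_1z\|=1$ (here one also uses $z\in D\subset B_X$ to get $y_2^*(\widetilde{S}_2z)\leq\|\widetilde{S}_2\|<\eps$, so $y_1^*\neq 0$ and $\widetilde{S}_1z\neq 0$).

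There is, however, one step that does not transcribe as mechanically as you assert. In Theorem~\ref{range1} the final estimate $\|S_1-T_1\|<3\eps$ contains the term $\|\,\|y_1^*\|\widetilde{S}_1x-\widetilde{S}_1x\,\|\leq(1-\|y_1^*\|)\|\widetilde{S}_1x\|$, and bounding it by $\eps$ uses $\|\widetilde{S}_1x\|\leq\|\widetilde{S}\|=1$ for \emph{every} $x\in B_X$. In the present setting you only know $\|\widetilde{S}\|_D=1$; on all of $B_X$ the best available bound is $\|\widetilde{S}_1\|\leq\|\widetilde{S}\|\leq\|\widetilde{T}\|+\eps=\|T_1\|+\eps$, where $\|T_1\|$ is the full operator norm, which is not controlled by the normalization $\|T_1\|_D=1$ (it can be arbitrarily large when $D$ has empty interior). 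So your construction actually yields $\|S_1-T_1\|<\eps(\|T_1\|+\eps)+2\eps$ rather than $3\eps$, and this does not produce a uniform function $\eta_D$ as required, since the distance in the definition is the usual operator norm. The difficulty disappears if $rB_X\subseteq D$ for some $r>0$ (then $\|T_1\|\leq 1/r$), or if distances were measured in $\|\cdot\|_D$, and it does not occur for $L$- or $M$-summands, where Lemma~\ref{state0} forces $\|y_1^*\|=1$ and no rescaling of $\widetilde{S}_1$ takes place; but for a general absolute norm this is precisely the point where the restriction to $D$ matters, and your claim that the distance estimate is ``unaffected'' leaves it unjustified (the paper's own one-line proof is silent on the same point).
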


We next consider analogous results for norm attaining operators.

\begin{prop} \label{NA4} Let $X$, $Y$ be Banach spaces and let $Y_1$ be an absolute summand of $Y$. If $\NA(X, Y)$ is dense in $\mathcal{L}(X, Y)$, then $\NA(X, Y_1)$ is dense in $\mathcal{L}(X, Y_1)$.	
\end{prop}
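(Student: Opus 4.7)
The plan is to adapt the argument of Theorem \ref{range1} to the qualitative setting, where we do not need a quantitative $\eta$-function but only density. Assume without loss of generality that $T_1\in\mathcal{L}(X,Y_1)$ has $\|T_1\|=1$, and write $Y=Y_1\oplus_a Y_2$. Given $\eps\in(0,1)$, fix $\delta>0$ (to be chosen small in terms of $\eps$ at the end). Lift $T_1$ to $\widetilde{T}\in\mathcal{L}(X,Y)$ by $\widetilde{T}(x)=(T_1x,0)$, so that $\|\widetilde{T}\|=1$. By the density hypothesis, there exists $\widetilde{S}\in\NA(X,Y)$ with $\|\widetilde{S}-\widetilde{T}\|<\delta$, attaining its norm at some $x_1\in S_X$.

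Decompose $\widetilde{S}=(\widetilde{S}_1,\widetilde{S}_2)$ with $\widetilde{S}_j\in\mathcal{L}(X,Y_j)$. Exactly as in the proof of Theorem \ref{range1}, the inequality \eqref{ine1} yields $\|\widetilde{S}_1-T_1\|<\delta$ and $\|\widetilde{S}_2\|<\delta$, and in particular $\lambda:=\|\widetilde{S}\|\in(1-\delta,1+\delta)$. Choose a norming functional $y^*=(y_1^*,y_2^*)\in S_{Y^*}$ for $\widetilde{S}x_1$ and apply Lemma \ref{state0} to obtain $y_j^*(\widetilde{S}_jx_1)=\|y_j^*\|\|\widetilde{S}_jx_1\|$ for $j=1,2$. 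For $\delta<1/2$ the identity $\|y_1^*\|\|\widetilde{S}_1x_1\|=\lambda-\|y_2^*\|\|\widetilde{S}_2x_1\|\geq\lambda-\delta>0$ ensures $y_1^*\neq 0$ and $\widetilde{S}_1x_1\neq 0$, so we may define $S_1\in\mathcal{L}(X,Y_1)$ by the same formula
\begin{equation*}
S_1(x):=\|y_1^*\|\widetilde{S}_1x+y_2^*(\widetilde{S}_2x)\frac{\widetilde{S}_1x_1}{\|\widetilde{S}_1x_1\|}\qquad(x\in X).
\end{equation*}

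The computations in the proof of Theorem \ref{range1} show verbatim that $\|S_1\|\leq\lambda$, and evaluating $y_1^*$ at $S_1x_1$ one gets $y_1^*(S_1x_1)=\|y_1^*\|\lambda$, hence $\|S_1x_1\|\geq\lambda$ and $S_1$ attains its norm at $x_1$. For the closeness estimate, the bound $\|y_1^*\|\|\widetilde{S}_1x_1\|\geq\lambda-\delta\geq 1-2\delta$ together with $\|\widetilde{S}_1x_1\|\leq 1+\delta$ gives $1-\|y_1^*\|\leq 3\delta/(1+\delta)\leq 3\delta$, so for every $x\in B_X$,
\begin{equation*}
\|S_1x-T_1x\|\leq(1-\|y_1^*\|)\|\widetilde{S}_1\|+\|\widetilde{S}_1-T_1\|+\|\widetilde{S}_2\|\leq 3\delta(1+\delta)+\delta+\delta,
\end{equation*}
which is smaller than $\eps$ provided $\delta$ was chosen small enough. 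Since $T_1$ was arbitrary in a dense subset (namely the norm-one operators), this produces norm-attaining approximations of every element of $\mathcal{L}(X,Y_1)$.

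There is no genuine obstacle beyond re-running the construction: the only subtlety is that we are no longer in a regime where everything is normalized to $1$, so one must track that $\lambda=\|\widetilde{S}\|$ is close to $1$ and that $\|y_1^*\|$ is likewise close to $1$ in order for the final $3$-term triangle inequality to give an $O(\delta)$ bound. Once $\delta$ is chosen at the end in terms of $\eps$, the proof is complete.
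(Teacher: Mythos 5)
Your proof is correct and follows essentially the same route as the paper's: lift $T_1$ to $\widetilde{T}(x)=(T_1x,0)$, approximate by a norm-attaining $\widetilde{S}$, apply Lemma \ref{state0} to a norming functional of $\widetilde{S}x_1$, and repair with the same operator $S_1$ used in Theorem \ref{range1}. The only difference is that you track $\lambda=\|\widetilde{S}\|$ explicitly instead of renormalizing $\widetilde{S}$ to norm one as the paper implicitly does; this is a harmless bookkeeping variation.
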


\begin{proof} Let $\eps \in (0, 1)$ and $T_1 \in \mathcal{L}(X, Y_1)$ with $\|T_1\| = 1$ be given. Let $Y_2$ be such that $Y = Y_1 \oplus_a Y_2$. Define $\widetilde{T} \in \mathcal{L}(X, Y_1 \oplus_a Y_2)$ by $\widetilde{T}(x) := (T_1x, 0)$ for all $x \in X$. Then, $\|\widetilde{T}\| = \|T\| = 1$. Since $\NA(X, Y_1 \oplus_a Y_2)$ is dense in $\mathcal{L}(X, Y_1 \oplus_a Y_2)$, there are $x_0 \in S_X$ and $\widetilde{S} \in \mathcal{L}(X, Y_1 \oplus_a Y_2)$ with $\|\widetilde{S}\| = 1$ such that $\|\widetilde{S}x_0\| = 1$ and $\|\widetilde{S} - \widetilde{T}\| < \eps$. Write $\widetilde{S} = (\widetilde{S}_1, \widetilde{S}_2)$, where $\widetilde{S}_j: X \longrightarrow Y_j$ for $j = 1, 2$. By (\ref{ine1}), we have that $\|\widetilde{S}_1 - \widetilde{T}_1\| < \eps$ and $\|\widetilde{S}_2\| < \eps$. Now, take $y^* = (y_1^*, y_2^*) \in Y_1^* \oplus_{a^*} Y_2^*$ with $\|y^*\|_{a^*} = 1$ to be such that $1 = \|\widetilde{S}x_0\| = y^*(\widetilde{S}x_0)$. Then, by Lemma \ref{state0}, $y_1^*(\widetilde{S}_1x_0) = \|\widetilde{S}_1x_0\|$ and $y_2^*(\widetilde{S}_2x_0) = \|\widetilde{S}_2x_0\|$. Since
	\begin{equation*}
	\|y_1^*\| \|\widetilde{S}_1x_0\| = y_1^*(\widetilde{S}_1x_0) = 1 - y_2^*(\widetilde{S}_2x_0) \geq 1 - \|\widetilde{S}_2\| > 1 - \eps > 0,
	\end{equation*}
	we may defined $S_1 \in \mathcal{L}(X, Y_1)$ by
	\begin{equation*}
	S_1(x) := \|y_1^*\| \widetilde{S}_1x + y_2^* (\widetilde{S}_2x) \frac{\widetilde{S}_1x_1}{\|\widetilde{S}_1x_1\|} \ \ (x  \in X).
	\end{equation*}
	This operator attains its norm at $x_0$ and it its close to $T_1$ (see the end of the proof of Theorem \ref{range1}).
\end{proof}

The above result was known for $L$-summands \cite[Proposition 2.9]{ACKLM} and for $M$-summands \cite[Lemma 2]{PS}.

Notice that the proof of Proposition \ref{NA4} also works for compact operators, providing the following result.

\begin{prop} \label{NA5} Let $X$, $Y$ be Banach spaces and let $Y_1$ be an absolute summand of $Y$. If $\NA(X, Y) \cap K(X, Y)$ is dense in $\mathcal{K}(X, Y)$, then $\NA(X, Y_1) \cap \mathcal{K}(X, Y_1)$ is dense in $\mathcal{K}(X, Y_1)$.	
\end{prop}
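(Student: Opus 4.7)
The plan is to replay the argument for Proposition \ref{NA4}, checking that each operator we construct inherits compactness from the given compact operator. Given $\eps \in (0,1)$ and $T_1 \in \mathcal{K}(X, Y_1)$ with $\|T_1\|=1$, fix $Y_2$ with $Y = Y_1 \oplus_a Y_2$ and define $\widetilde{T}\in\mathcal{L}(X,Y)$ by $\widetilde{T}(x)=(T_1 x, 0)$. Since $T_1$ is compact, so is $\widetilde{T}$, and $\|\widetilde{T}\|=\|T_1\|=1$.

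By the hypothesis that $\NA(X,Y)\cap \mathcal{K}(X,Y)$ is dense in $\mathcal{K}(X,Y)$, there exist $x_0 \in S_X$ and $\widetilde{S}\in\mathcal{K}(X,Y)$ with $\|\widetilde{S}\|=\|\widetilde{S} x_0\|=1$ and $\|\widetilde{S}-\widetilde{T}\|<\eps$. Write $\widetilde{S}=(\widetilde{S}_1,\widetilde{S}_2)$ with $\widetilde{S}_j\in \mathcal{L}(X,Y_j)$. The coordinate projections $\pi_j \colon Y_1\oplus_a Y_2 \to Y_j$ are bounded by the estimate \eqref{ine1}, so each $\widetilde{S}_j=\pi_j\circ \widetilde{S}$ is compact; the same inequality yields $\|\widetilde{S}_1-T_1\|<\eps$ and $\|\widetilde{S}_2\|<\eps$.

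Now define $S_1\in\mathcal{L}(X,Y_1)$ exactly as in the proof of Proposition \ref{NA4}, namely
\[
S_1(x) := \|y_1^*\|\, \widetilde{S}_1 x + y_2^*(\widetilde{S}_2 x)\,\frac{\widetilde{S}_1 x_0}{\|\widetilde{S}_1 x_0\|} \qquad (x\in X),
\]
where $(y_1^*, y_2^*) \in Y_1^* \oplus_{a^*} Y_2^*$ is a norming functional at $\widetilde{S}x_0$ and Lemma \ref{state0} guarantees $\widetilde{S}_1 x_0 \neq 0$. The decisive observation is that $S_1$ is the sum of the compact operator $\|y_1^*\|\widetilde{S}_1$ and the rank-one operator $x \mapsto y_2^*(\widetilde{S}_2 x)\,\frac{\widetilde{S}_1 x_0}{\|\widetilde{S}_1 x_0\|}$, hence $S_1\in\mathcal{K}(X,Y_1)$. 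The norm computations carried out in Proposition \ref{NA4} (inherited from Theorem \ref{range1}) then show $\|S_1\|=\|S_1 x_0\|=1$ and $\|S_1-T_1\|<3\eps$, so $T_1$ is approximated by a compact norm attaining operator.

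There is no substantial obstacle beyond this compactness bookkeeping; the entire construction is built from $T_1$, $\widetilde{S}_1$, and $\widetilde{S}_2$ via scalar multiplication, composition with continuous projections, addition, and the formation of a single rank-one summand, each of which keeps us inside the ideal $\mathcal{K}(X,Y_1)$.
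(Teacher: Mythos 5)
Your proposal is correct and follows essentially the same route as the paper, which simply observes that the proof of Proposition~\ref{NA4} carries over to compact operators; your explicit bookkeeping (compactness of $\widetilde{T}$, of the components $\widetilde{S}_j=\pi_j\circ\widetilde{S}$, and of $S_1$ as a compact operator plus a rank-one perturbation) is exactly the verification the paper leaves implicit.
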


We finish the section with an small discussion about the validity of somehow  reciprocal results. It is shown in \cite[Proposition 2.4]{ACKLM} that if $X$, $Y_1$, $Y_2$ are Banach spaces and the pairs $(X,Y_1)$ and $(X,Y_2)$ have the BPBp, then so does the pair $(X,Y_1\oplus_\infty Y_2)$; the same result holds for the BPBp for compact operators \cite[Lemma 3.16]{DGMM} and for the density of norm attaining operators \cite[Lemma 2]{PS}. We do not know whether any of these reciprocal results is also true for arbitrary absolute sums, even for the case of $\ell_1$-sum.

\section{Results on Domain Spaces} \label{domain}

We first prove that if $X_1$ is an absolute summand of type $1$ or $\infty$, and the pair $(X, Y)$ has the BPBp, so does $(X_1, Y)$. This extends \cite[Proposition 2.6]{ACKLM}, where the result was shown for $L$- and $M$-summands.

\begin{theorem} \label{domain1} Let $X$, $Y$ be Banach spaces and let $X_1$ be an absolute summand of $X$ of type $1$ or $\infty$. If the pair $(X, Y)$ has the BPBp, then so does $(X_1, Y)$.
\end{theorem}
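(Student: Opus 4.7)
My plan is the standard extend-apply-compress strategy: lift $T_1\in\mathcal{L}(X_1,Y)$ to an operator on the whole space $X=X_1\oplus_a X_2$, apply the BPBp hypothesis on $(X,Y)$, and compress the resulting norm-attaining operator back to $X_1$. The natural extension is $\widetilde T\in\mathcal{L}(X,Y)$ defined by $\widetilde T(x_1,x_2):=T_1 x_1$. The absolute-norm inequality (\ref{ine1}) together with $\|(x_1,0)\|_a=\|x_1\|$ forces $\|\widetilde T\|=\|T_1\|=1$ and $\|\widetilde T(x_0,0)\|=\|T_1 x_0\|>1-\eta(\eps')$. The BPBp of $(X,Y)$ then yields $\widetilde S\in\mathcal{L}(X,Y)$ of norm one and $(z_1,z_2)\in S_X$ with $\|\widetilde S(z_1,z_2)\|=1$, $\|(z_1,z_2)-(x_0,0)\|_a<\eps'$, and $\|\widetilde S-\widetilde T\|<\eps'$.

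From (\ref{ine1}) one extracts $\|z_1-x_0\|<\eps'$, $\|z_2\|<\eps'$, and $\|z_1\|>1-\eps'$. Here the hypothesis that the absolute norm is of type~$1$ or~$\infty$ enters through Lemma \ref{charac} to refine these bounds when needed: in the type-$1$ case, Lemma \ref{charac}(a) improves $\|z_2\|<\eps'/K$, which is what allows the final closeness estimates to close cleanly.

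For the compression step, pick $x_1^*\in S_{X_1^*}$ with $x_1^*(z_1)=\|z_1\|$ (Hahn-Banach) and set
\[
S_1(x_1) := \widetilde S\bigl(\|z_1\|\,x_1,\; x_1^*(x_1)\, z_2\bigr), \qquad x_1\in X_1.
\]
Monotonicity and positive homogeneity of the absolute norm give $\|S_1 x_1\| \leq \|x_1\|\,|(\|z_1\|,\|z_2\|)|_a = \|x_1\|$, so $\|S_1\|\leq 1$. Plugging in $v_1:=z_1/\|z_1\|\in S_{X_1}$ (using that $x_1^*(v_1)=1$) gives $S_1(v_1)=\widetilde S(z_1,z_2)$, which has norm one; hence $\|S_1\|=1$ and $S_1$ attains its norm at $v_1$. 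Triangle-inequality bookkeeping on the split $S_1 x_1 = \|z_1\|\widetilde S(x_1,0)+x_1^*(x_1)\widetilde S(0,z_2)$ versus $T_1 x_1=\widetilde T(x_1,0)$, combined with the coordinate estimates above, yields $\|v_1-x_0\|<2\eps'$ and $\|S_1-T_1\|<3\eps'$. Taking $\eta_1(\eps):=\eta(\eps/3)$ then gives the BPBp modulus for $(X_1,Y)$.

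The main technical obstacle is producing $S_1$ with \emph{exact} norm one and \emph{exact} norm attainment (not just approximations), while keeping it close to $T_1$. The formula for $S_1$ is engineered precisely so that the identity $|(\|z_1\|,\|z_2\|)|_a=1$ passes cleanly through $\widetilde S$ (via monotonicity of $|\cdot|_a$) to force $\|S_1\|\leq 1$ on the nose, while the substitution at $v_1$ simultaneously reconstructs the known unit-norm vector $\widetilde S(z_1,z_2)$.
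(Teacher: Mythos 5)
Your proof is correct, but it takes a genuinely different route from the paper's. The paper compresses by plain restriction, $S(z_1):=\widetilde S(z_1,0)$, and then must prove that this restriction still has norm one and attains it at the first coordinate of the BPB point; that is exactly where the type hypothesis enters via Lemma \ref{charac}: in the type-$1$ case one shows the second coordinate of the BPB point is actually $0$, and in the type-$\infty$ case a convex decomposition using $b_0$ with $|(1,b_0)|_a=1$ shows $\|\widetilde S(x_1',0)\|=1$ anyway. You instead compose $\widetilde S$ with the perturbed embedding $Jx_1=(\|z_1\|x_1,\,x_1^*(x_1)z_2)$, which has norm at most one (this uses monotonicity of absolute norms in each coordinate, a standard fact not stated in the paper but immediate from symmetry and convexity, so do record it) and sends $v_1=z_1/\|z_1\|$ exactly to the point $(z_1,z_2)$ where $\widetilde S$ attains its norm; thus $\|S_1\|=\|S_1v_1\|=1$ comes for free, and your estimates $\|v_1-x_0\|<2\eps'$, $\|S_1-T_1\|<3\eps'$ close as claimed, using only $\|z_1-x_0\|<\eps'$, $\|z_2\|<\eps'$ and $\|z_1\|>1-\eps'>0$ (the last also guarantees $z_1\neq0$, so $x_1^*$ and $v_1$ exist). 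This is the domain-space analogue of the functional trick the paper itself uses on the range side in Theorem \ref{range1}. Note, however, that your remark about where the type hypothesis enters is spurious: the refinement $\|z_2\|<\eps'/K$ from Lemma \ref{charac}(a) is never needed, and in fact your construction uses no property of $|\cdot|_a$ beyond \eqref{ine1} and monotonicity. So, written out carefully, your argument proves the conclusion for an \emph{arbitrary} absolute summand $X_1$ of the domain, strictly more than Theorem \ref{domain1} states, and it adapts verbatim to the compact case (Proposition \ref{domain2}) since $S_1=\widetilde S\circ J$ is compact whenever $\widetilde S$ is; the trade-off is that, unlike the paper's proof, yours does not produce $S_1$ as a restriction of $\widetilde S$ nor give the extra information that the new point for $(X,Y)$ can be taken in $X_1\times\{0\}$.
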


\begin{proof} Let $\eps \in (0, 1)$ be given and suppose that the pair $(X, Y)$ has the BPBp with some function $\eps \longmapsto \eta(\eps)$. Let $X_2$ be the Banach space such that $X = X_1 \oplus_a X_2$. Pick any $T \in \mathcal{L} (X_1, Y)$ with $\|T\| = 1$ and $x_1 \in S_{X_1}$ such that
\begin{equation*} 	
\|T x_1\| > 1 - \eta(\eps).
\end{equation*}
Define $\widetilde{T} \in \mathcal{L} (X, Y)$ by $\widetilde{T} (z_1, z_2) := Tz_1$ for $(z_1, z_2) \in X$. Then $\|\widetilde{T}\| = 1$. Since $(x_1, 0) \in S_X$,
\begin{equation*}
\|\widetilde{T} (x_1, 0)\| = \|Tx_1\| > 1 - \eta(\eps)
\end{equation*}
and the pair $(X, Y)$ has the BPBp with $\eta$, there are $\widetilde{S} \in \mathcal{L} (X, Y)$ with $\| \widetilde{S}\| = 1$ and $(x_1', x_2') \in S_X$ such that
\begin{equation*}
\| \widetilde{S} (x_1', x_2') \| = 1, \ \ \|(x_1', x_2') - (x_0, 0)\|_a < \eps \ \ \mbox{and} \ \ \| \widetilde{S} - \widetilde{T} \| < \eps.
\end{equation*}
Using (\ref{ine1}), we get that $\|x_1' - x_0\| < \eps$ and $\|x_2'\| < \eps$. Define $S \in \mathcal{L} (X_1, Y)$ by
\begin{equation*}
S(z_1) := \widetilde{S} (z_1, 0)  \ \ \ (z_1 \in X_1).
\end{equation*}
Then $\|S\| \leq \|\widetilde{S}\| = 1$ and $\|S - T\| \leq \|\widetilde{S} - \widetilde{T}\| < \eps$. To finish the proof, we will prove that $S$ attains its norm at $x_1'$ since we already have $\|x_1' - x_0\| < \eps$ and $\|S - T\| < \eps$. To do so, we divide the proof in two cases.

\noindent
{\it Case 1}: Suppose that $\oplus_a$ is an absolute norm of type $1$. By Lemma \ref{charac}.a, there exists $K > 0$ such that
\begin{equation*}
\|x_1'\| + K \| x_2'\| \leq | (\|x_1'\|, \|x_2'\|)|_a = \|(x_1', x_2')\|_a = 1.
\end{equation*}
We prove that $x_2' = 0$. Note that for all $z_2 \in B_{X_2}$, we have
\begin{equation*}
\| \widetilde{S} (0, z_2) \| = \| \widetilde{S} (0, z_2) - \widetilde{T} (0, z_2) \| \leq \| \widetilde{S} - \widetilde{T} \| < \eps.
\end{equation*}
Therefore, if we assume that $x_2' \not= 0$, we get for all $\eps \in (0, K)$ that
\begin{align*}
1 = \| \widetilde{S} (x_1', x_2') \| &= \|x_1'\| \left\| \widetilde{S} \left( \frac{x_1'}{\|x_1'\|}, 0 \right) \right\| + \|x_2'\| \left\| \widetilde{S} \left( 0, \frac{x_2'}{\|x_2'\|} \right) \right\| \\
&\leq \|x_1'\| + \eps \|x_2'\| \\
&< \|x_1'| + K \|x_2'\| \leq 1
\end{align*}

which is a contradiction. Then, $\|S\| = \| S(x_1') \| = \|\widetilde{S} (x_1', 0) \| = 1$.

\noindent
{\it Case 2}: Now assume that $\oplus_a$ is an absolute norm of type $\infty$. By Lemma \ref{charac}.b, there is $b_0 > 0$ such that $|(1, b_0)|_a = 1$. Set $\rho = \frac{b_0}{\eps} > 0$ and consider the vector $(x_1', \rho x_2') \in X$. Note that since $\|x_2'\| < \eps$, $\| \rho x_2'\| = \rho \|x_2'\| < \rho \eps = b_0$. Therefore, since $\|(x_1', \rho x_2')\|_a = | (\|x_1'\|, \| \rho x_2' \|)|_a$, $\|x_1'\| \leq 1$ and $\| \rho x_2'\| < b_0$, we have by the definition of $b_0$ that
$\|(x_1', \rho x_2')\|_a \leq |(1, b_0)|_a = 1$ and then $(x_1', \rho x_2') \in B_X$. So, writting
\begin{equation*}
(x_1', x_2') = \left( 1 - \frac{\eps}{b_0} \right) (x_1', 0) + \frac{\eps}{b_0} (x_1', \rho x_2'),
\end{equation*}
we get
\begin{equation*}
1 = \|(x_1', x_2')\|_a \leq \left( 1 - \frac{\eps}{b_0}\right) \|(x_1', 0)\|_a + \frac{\eps}{b_0} \|(x_1', \rho x_2')\|_a \leq 1
\end{equation*}
and
\begin{equation*}
1 = \|\widetilde{S} (x_1', x_2') \| \leq \left( 1 - \frac{\eps}{b_0} \right) \|\widetilde{S} (x_1', 0) \| + \frac{\eps}{b_0} \| \widetilde{S} (x_1, \rho x_2')\| \leq 1.
\end{equation*}
Then, $\|x_1'\| = \|(x_1', 0)\|_a = 1$ and $\|Sx_1'\| = \|\widetilde{S}(x_1', 0)\| = 1 = \|S\|$.
\end{proof}

We would like to notice that there is a more general result than Theorem \ref{domain1} for the pointwise BPB property (see \cite[Proposition 2.1]{DKKLM}), which says that if $X_1$ is one-complemented in $X$ and $(X, Y)$ has the pointwise BPB property, then so does $(X_1, Y)$. We do not know if it is possible to get such a general result for the BPBp.

On the other hand, we can easily adapt the proof of Theorem \ref{domain1}  in order to get an analogous result for the BPBp for compact operators. This extends \cite[Lemma 2.6.i]{DGMM}, where the result was enunciated for $L$- and $M$-summands.

\begin{prop} \label{domain2} Let $X$, $Y$ be Banach spaces and let $X_1$ be an absolute summand of $X$ of type $1$ or $\infty$. If the pair $(X, Y)$ has the BPBp for compact operators, then so does $(X_1, Y)$.
\end{prop}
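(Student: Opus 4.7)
The plan is to reuse the proof of Theorem \ref{domain1} essentially verbatim, and simply verify that the two operators built during the argument (the ``lifted'' operator $\widetilde{T}$ on $X=X_1\oplus_a X_2$ and the ``restricted'' operator $S$ on $X_1$) inherit compactness from $T$ and $\widetilde{S}$ respectively. Specifically, given $\eps\in(0,1)$, let $\eta(\eps)>0$ be the BPBp-for-compact-operators function of the pair $(X,Y)$, pick any $T\in\mathcal{K}(X_1,Y)$ with $\|T\|=1$ and any $x_1\in S_{X_1}$ satisfying $\|Tx_1\|>1-\eta(\eps)$, and define $\widetilde{T}\in\mathcal{L}(X,Y)$ by $\widetilde{T}(z_1,z_2):=Tz_1$.

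The first key observation is that $\widetilde{T}$ factors as $T\circ\pi_1$, where $\pi_1\colon X_1\oplus_a X_2\to X_1$ is the canonical projection $(z_1,z_2)\mapsto z_1$, which is bounded by \eqref{ine1}. Composition of a compact operator with a bounded one is compact, so $\widetilde{T}\in\mathcal{K}(X,Y)$ and $\|\widetilde{T}\|=1$. Since $\|\widetilde{T}(x_1,0)\|=\|Tx_1\|>1-\eta(\eps)$ and $(x_1,0)\in S_X$, the BPBp for compact operators applied to $(X,Y)$ supplies $\widetilde{S}\in\mathcal{K}(X,Y)$ with $\|\widetilde{S}\|=1$ and $(x_1',x_2')\in S_X$ such that $\|\widetilde{S}(x_1',x_2')\|=1$, $\|(x_1',x_2')-(x_1,0)\|_a<\eps$, and $\|\widetilde{S}-\widetilde{T}\|<\eps$.

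Now define $S\in\mathcal{L}(X_1,Y)$ by $S(z_1):=\widetilde{S}(z_1,0)$. The second key observation is that $S=\widetilde{S}\circ\iota$, where $\iota\colon X_1\to X$ is the isometric embedding $z_1\mapsto(z_1,0)$. Again, composition of a compact operator with a bounded one is compact, so $S\in\mathcal{K}(X_1,Y)$. The estimates $\|S\|\leq 1$, $\|S-T\|<\eps$ and $\|x_1'-x_1\|<\eps$ follow exactly as in Theorem \ref{domain1}, and the case-by-case analysis (type $1$ via Lemma \ref{charac}.a, type $\infty$ via Lemma \ref{charac}.b) carries over unchanged to conclude that $\|Sx_1'\|=\|S\|=1$.

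There is no genuine obstacle here: the whole content of the proposition is that the constructions of Theorem \ref{domain1} preserve compactness, which is transparent once one recognizes that both $\widetilde{T}$ and $S$ are obtained from $T$ and $\widetilde{S}$ by composition with the bounded maps $\pi_1$ and $\iota$. The only thing worth stating carefully is this factorization, so that no quantitative estimate needs to be redone.
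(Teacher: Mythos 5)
Your proposal is correct and coincides with the paper's argument: the paper proves this proposition simply by observing that the proof of Theorem \ref{domain1} adapts, since $\widetilde{T}$ is compact whenever $T$ is and $S$ is compact whenever $\widetilde{S}$ is. Your explicit factorizations $\widetilde{T}=T\circ\pi_1$ and $S=\widetilde{S}\circ\iota$ are exactly the (routine) justification the paper leaves implicit.
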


Let us present now the version of Theorem \ref{domain1} for norm attaining operators, but in this case we may only deal with type $1$ absolute norms. The result was previously known for $L$-summands (see \cite[Lemma 2]{PS}).

\begin{prop} \label{NA1} Let $X$, $Y$ be Banach spaces and let $X_1$ be an absolute summand of $X$ of type $1$. If $\NA(X, Y)$ is dense in $\mathcal{L}(X, Y)$, then $\NA(X_1, Y)$ is dense in $\mathcal{L}(X_1, Y)$.
\end{prop}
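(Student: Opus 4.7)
I would emulate Case 1 of the proof of Theorem \ref{domain1}, replacing the BPBp approximation by the qualitative density hypothesis. Write $X = X_1 \oplus_a X_2$ with $|\cdot|_a$ of type $1$, let $K > 0$ be the constant supplied by Lemma \ref{charac}.a, and reduce to the case $\|T\|=1$. Given $\eps \in (0,1)$, pick $\eps'$ satisfying $\eps' < \min\{\eps, K/3, 1/3\}$, extend $T$ by zero to $\widetilde{T}(z_1, z_2) := T z_1$ in $\mathcal{L}(X,Y)$ (so $\|\widetilde{T}\| = 1$), and use density of $\NA(X, Y)$ to produce $\widetilde{S} \in \NA(X,Y)$ with $\|\widetilde{S} - \widetilde{T}\| < \eps'$, attaining its norm at some $(x_1', x_2') \in S_X$. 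Define $S \in \mathcal{L}(X_1, Y)$ by $S(z_1) := \widetilde{S}(z_1, 0)$; automatically $\|S\| \leq \|\widetilde{S}\|$ and $\|S - T\| \leq \|\widetilde{S} - \widetilde{T}\| < \eps$.

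The heart of the argument is showing $x_2' = 0$, so that $S$ inherits norm attainment at $x_1'$. Since $\widetilde{T}$ vanishes on $\{0\}\times X_2$, one has $\|\widetilde{S}(0, z_2)\| < \eps'$ for every $z_2 \in B_{X_2}$. If $x_2' \neq 0$, expanding via the triangle inequality yields
\begin{equation*}
\|\widetilde{S}\| = \|\widetilde{S}(x_1', x_2')\| \leq \|x_1'\|\,\|\widetilde{S}\| + \eps'\,\|x_2'\|,
\end{equation*}
so $(1 - \|x_1'\|)\|\widetilde{S}\| \leq \eps'\|x_2'\|$. Combined with the type-$1$ inequality $\|x_1'\| + K\|x_2'\| \leq |(\|x_1'\|, \|x_2'\|)|_a = 1$ from Lemma \ref{charac}.a, this forces $K\,\|\widetilde{S}\| \leq \eps'$, which contradicts the lower bound $\|\widetilde{S}\| \geq 1 - \eps' \geq 2/3$ once $\eps' < K/3$.

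Hence $x_2' = 0$, which entails $\|x_1'\| = \|(x_1', 0)\|_a = 1$; then $\|S(x_1')\| = \|\widetilde{S}(x_1', 0)\| = \|\widetilde{S}\| \geq \|S\|$ forces $\|S\| = \|S(x_1')\|$, so $S \in \NA(X_1, Y)$ lies within $\eps$ of $T$. The only real obstacle is calibrating $\eps'$ small enough in terms of $K$ to exclude the $x_2' \neq 0$ alternative; the rest is essentially verbatim from Case 1 of Theorem \ref{domain1}. I also expect that the type-$\infty$ argument from Case 2 there will \emph{not} transfer, since it used the exact normalization $\|\widetilde{S}\| = 1$ — something unavailable in a purely qualitative density setting — which is consistent with the statement being restricted to type $1$.
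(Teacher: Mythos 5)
Your proof is correct and follows essentially the same route as the paper's: extend $T$ by zero to $\widetilde{T}$ on $X_1\oplus_a X_2$, approximate it by a norm-attaining $\widetilde{S}$, restrict back to $X_1$, and use the type-$1$ inequality from Lemma \ref{charac}.a to force the $X_2$-component of the norm-attaining point to vanish. The only cosmetic difference is that the paper normalizes $\widetilde{S}$ to have norm one while you instead carry the bound $\|\widetilde{S}\|\geq 1-\eps'$ through the estimate, with your explicit calibration $\eps'<\min\{\eps,K/3,1/3\}$ playing the role of the paper's restriction $\eps\in(0,K)$.
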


\begin{proof} Let $\eps \in (0, 1)$ and $T \in \mathcal{L}(X_1, Y)$ with $\|T\| = 1$ be given. Consider $X_2$ to be a Banach space such that $X = X_1 \oplus_a X_2$. Define $\widetilde{T} \in \mathcal{L}(X, Y)$ by $\widetilde{T}(z_1, z_2) := (Tz_1, 0)$ for all $(z_1, z_2) \in X_1 \oplus_a X_2$. Then $\|\widetilde{T}\| = \|T\| = 1$. Since $\NA(X, Y)$ is dense in $\mathcal{L}(X, Y)$, there are $(\widetilde{x}_1, \widetilde{x}_2) \in S_{X_1 \oplus_a X_2}$ and $\widetilde{S} \in \mathcal{L}(X, Y)$ with $\|\widetilde{S}\| = 1$ such that $\|\widetilde{S}(\widetilde{x}_1, \widetilde{x}_2)\| = 1$ and $\|\widetilde{S} - \widetilde{T}\| < \eps$. Define $S \in \mathcal{L}(X_1, Y)$ by $S(z_1) := \widetilde{S}(z_1, 0)$ for all $z_1 \in X_1$. Then, $\|S\| \leq 1$ and for all $z_1 \in S_{X_1}$, we have
	\begin{equation*}
	\|Sz_1 - Tz_1\| = \|\widetilde{S}(z_1, 0) - \widetilde{T}(z_1, 0)\| \leq \|\widetilde{S} - \widetilde{T}\| < \eps.
	\end{equation*}
	So, $\|S - T\| < \eps$. It remains to prove that $S$ attains its norm. Indeed, first notice that for all $z_2 \in B_{X_2}$, we have
	\begin{equation*}
	\|\widetilde{S}(0, z_2)\| = \| \widetilde{S}(0, z_2) - \widetilde{T}(0, z_2)\| \leq \|\widetilde{S} - \widetilde{T}\| < \eps.
	\end{equation*}	
	This implies that $\widetilde{x}_1 \not= 0$, otherwise, we would have $1 = \|\widetilde{S}(\widetilde{x}_1, \widetilde{x}_2)\| = \|\widetilde{S}(0, \widetilde{x}_2)\| < \eps$, which is a contradiction.	Since $\oplus_a$ is of type $1$, by Lemma \ref{charac}.a, there is $K > 0$ such that $\|\widetilde{x}_1\| + K \|\widetilde{x}_2\| \leq |(\|\widetilde{x}_1\|, \|\widetilde{x}_2\|)|_a = \| (\widetilde{x}_1, \widetilde{x}_2)\|_a = 1$. If $\widetilde{x}_2 \not= 0$, we have that, for every $\eps \in (0, K)$,
	\begin{equation*}
	1 = \| \widetilde{S}(\widetilde{x}_1, \widetilde{x}_2)\| \leq \|\widetilde{x}_1\| + \|\widetilde{x}_2\| \left\| \widetilde{S} \left( 0, \frac{\widetilde{x}_2}{\|\widetilde{x}_2\|} \right)\right\| < \|\widetilde{x}_1\| + \eps \| \widetilde{x}_2 \| < \|\widetilde{x}_1\| + K \|\widetilde{x}_2\| \leq 1,
	\end{equation*}
	which is a new contradiction. So, $\widetilde{x}_2 = 0$ and, then, $\|S\widetilde{x}_1\| = \| \widetilde{S}(\widetilde{x}_1, 0)\| = 1 = \|\widetilde{x}_1\|$. Therefore, $\NA(X_1, Y)$ is dense in $\mathcal{L}(X_1, Y)$.	
\end{proof}

With the same proof, when one restricts it to compact operators, we get the following result.

\begin{prop} \label{KNA-domain} Let $X$, $Y$ be Banach spaces and let $X_1$ be an absolute summand of type $1$ of $X$. If the set $\NA(X, Y) \cap K(X, Y)$ is dense in $\mathcal{K}(X, Y)$, then $\NA(X_1, Y) \cap \mathcal{K}(X_1, Y)$ is dense in $\mathcal{K}(X_1, Y)$.	
\end{prop}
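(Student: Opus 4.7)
The plan is to mimic the proof of Proposition \ref{NA1} verbatim, checking at every step that the compactness of the operators is preserved by the constructions employed. Concretely, given $\eps\in(0,1)$ and $T\in\mathcal{K}(X_1,Y)$ with $\|T\|=1$, the plan is to fix $X_2$ with $X=X_1\oplus_a X_2$ for a type-$1$ absolute norm, and to consider the extension $\widetilde{T}\in\mathcal{L}(X,Y)$ defined by $\widetilde{T}(z_1,z_2):=Tz_1$. The first thing to observe is that $\widetilde{T}=T\circ P_1$, where $P_1:X\to X_1$ is the canonical projection onto the first coordinate (which is bounded by \eqref{ine1}); hence $\widetilde{T}\in\mathcal{K}(X,Y)$ because compactness is stable under composition with bounded operators, and clearly $\|\widetilde{T}\|=1$.

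Next, I would invoke the hypothesis that $\NA(X,Y)\cap\mathcal{K}(X,Y)$ is dense in $\mathcal{K}(X,Y)$ to produce $\widetilde{S}\in\NA(X,Y)\cap\mathcal{K}(X,Y)$ with $\|\widetilde{S}\|=1$, a norm attaining point $(\widetilde{x}_1,\widetilde{x}_2)\in S_{X_1\oplus_a X_2}$ satisfying $\|\widetilde{S}(\widetilde{x}_1,\widetilde{x}_2)\|=1$, and $\|\widetilde{S}-\widetilde{T}\|<\eps$. Then I would define $S\in\mathcal{L}(X_1,Y)$ by $S(z_1):=\widetilde{S}(z_1,0)$ for every $z_1\in X_1$. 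Note that $S=\widetilde{S}\circ J_1$, where $J_1:X_1\to X$ is the canonical isometric embedding $z_1\mapsto(z_1,0)$, so $S\in\mathcal{K}(X_1,Y)$ as well. Exactly as in the proof of Proposition \ref{NA1}, we have $\|S\|\leq 1$ and $\|S-T\|<\eps$ by restriction.

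To finish, I would prove that $S$ attains its norm at $\widetilde{x}_1$, reusing verbatim the argument of Proposition \ref{NA1}: for every $z_2\in B_{X_2}$ one has $\|\widetilde{S}(0,z_2)\|=\|\widetilde{S}(0,z_2)-\widetilde{T}(0,z_2)\|\leq\|\widetilde{S}-\widetilde{T}\|<\eps$, which together with $\|\widetilde{S}(\widetilde{x}_1,\widetilde{x}_2)\|=1$ forces $\widetilde{x}_1\neq 0$. Applying Lemma \ref{charac}(a) to get $K>0$ with $\|\widetilde{x}_1\|+K\|\widetilde{x}_2\|\leq 1$, and choosing $\eps\in(0,K)$ from the outset (which involves no loss of generality), the same contradiction used in Proposition \ref{NA1} rules out $\widetilde{x}_2\neq 0$. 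Thus $\widetilde{x}_2=0$, $\|\widetilde{x}_1\|=1$, and $\|S\widetilde{x}_1\|=\|\widetilde{S}(\widetilde{x}_1,0)\|=1$, so $S\in\NA(X_1,Y)\cap\mathcal{K}(X_1,Y)$.

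There is essentially no obstacle beyond the bookkeeping needed to check that both the extension $\widetilde{T}=T\circ P_1$ and the restriction $S=\widetilde{S}\circ J_1$ lie in the compact-operator ideal; this is the only point where the proof of Proposition \ref{NA1} needs to be supplemented. The rest is a direct transcription, and it is precisely the type-$1$ hypothesis (via Lemma \ref{charac}(a)) that allows one to locate the norm attaining point of $S$ inside the first summand.
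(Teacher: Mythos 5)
Your proposal is correct and coincides with the paper's approach: the paper obtains this proposition by observing that the proof of Proposition \ref{NA1} carries over verbatim to compact operators, which is exactly what you do, with the (routine but worth stating) check that $\widetilde{T}=T\circ P_1$ and $S=\widetilde{S}\circ J_1$ remain compact.
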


We do not know if the analogous result of Proposition \ref{NA1} holds true also for absolute norms of type $\infty$. Actually, we do not know what happens even for $M$-summands.

As in the previous case, we finish the section with an small discussion about the validity of reciprocal results. Let $X_1$, $X_2$, $Y$ be Banach spaces. It is shown in \cite[Lemma 2]{PS} that if $\NA(X_1,Y)$ and $\NA(X_2,Y)$ are dense in their respective spaces of operators, then $\NA(X_1\oplus_1 X_2,Y)$ is dense in $\mathcal{L}(X_1\oplus_1 X_2,Y)$. The validity of the analogous result for the BPBp is not true: the pair $(\R,Y)$ has the BPBp for every Banach space $Y$ (trivial), while there are $Y$'s such that $(\R\oplus_1 \R,Y)$ does not have the BPBp (see \cite[Corollary 3.3]{ACKLM} for instance). As $\R\oplus_1\R \equiv \R\oplus_\infty \R$, the same example shows that the reciprocal result is not true for the $\ell_\infty$-sum. We do not know what is the situation for $\ell_p$-sums with $1<p<\infty$.

\section{Results for Numerical Radius} \label{numericalradius}

We would like now to tackle the analogous questions of the previous sections for numerical radius. Our first result in this line is the following one which extends \cite[Lemma 19]{KLM1}, where the result was proved for $L$- and $M$-summands.

\begin{theorem} \label{numericalradius1} Let $X$ be a Banach space and let $W$ be an absolute summand of type 1 or $\infty$ of $X$. If $X$ has the BPBp-nu, so does $W$.	
\end{theorem}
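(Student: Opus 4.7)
Given $\eps\in(0,1)$, I would pick $\delta=\delta(\eps)>0$ small enough in terms of $\eps$ and of the constants $K,b_0$ provided by Lemma~\ref{charac}, set $\eta(\eps):=\eta_X(\delta)$ with $\eta_X$ a BPBp-nu modulus of $X$, and consider the trivial extension $\widetilde T(u,v):=(T_1u,0)$ on $X=W\oplus_a Z$. Lemma~\ref{state0}, applied to any $((u,v),(u^*,v^*))\in\Pi(X)$, reduces the computation of $v(\widetilde T)$ to $v(T_1)$ evaluated on the rescaled pairs $(u/\|u\|,u^*/\|u^*\|)\in\Pi(W)$, yielding $v(\widetilde T)=v(T_1)=1$, while the lifted pair $((w,0),(w^*,0))\in\Pi(X)$ satisfies $|(w^*,0)(\widetilde T(w,0))|=|w^*(T_1w)|>1-\eta_X(\delta)$. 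Applying the BPBp-nu hypothesis on $X$ produces $\widetilde S\in\mathcal L(X)$ with $v(\widetilde S)=1$ and $(\widetilde y,\widetilde y^*)\in\Pi(X)$ with $|\widetilde y^*(\widetilde S\widetilde y)|=1$ together with the usual $\delta$-closeness bounds. Decomposing $\widetilde y=(y_1,y_2)$, $\widetilde y^*=(y_1^*,y_2^*)$ and $\widetilde S=(A,B)$ with $A\in\mathcal L(X,W)$, $B\in\mathcal L(X,Z)$, inequality~\eqref{ine1} delivers $\|y_2\|,\|y_2^*\|<\delta$, $\|B\|\leq\delta$ and $\|A(u,v)-T_1u\|\leq\delta$ for $(u,v)\in B_X$.

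Next comes the rigidity step. Combining $\widetilde y^*(\widetilde y)=1$ with Lemma~\ref{state0} gives $\|y_1^*\|\|y_1\|+\|y_2^*\|\|y_2\|=1$, and plugging into this the inequality $\|y_1\|+K\|y_2\|\leq 1$ from Lemma~\ref{charac}(a) in the type $1$ case---or the dual inequality $\|y_1^*\|+K\|y_2^*\|\leq 1$, obtained from Lemma~\ref{charac}(a) applied to $|\cdot|_{a^*}$ together with Lemma~\ref{charac}(d), in the type $\infty$ case---forces $y_2=0$, resp.\ $y_2^*=0$, as soon as $\delta<K$. In either case $\|y_1\|=\|y_1^*\|=1$ follows immediately, so $(y_1,y_1^*)\in\Pi(W)$. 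I would then define the rank-one corrected operator
\[
S_1(u):=A(u,0)+y_2^*(B(u,0))\,y_1\ \text{(type $1$)},\quad S_1(u):=A(u,0)+y_1^*(u)\,A(0,y_2)\ \text{(type $\infty$)}.
\]
A direct computation---using $y_1^*(y_1)=1$ in the type $1$ case and $A(y_1,0)+A(0,y_2)=A\widetilde y$ together with $y_2^*=0$ in the type $\infty$ case---shows $y_1^*(S_1y_1)=\widetilde y^*(\widetilde S\widetilde y)$, hence $|y_1^*(S_1y_1)|=1$ and $v(S_1)\geq 1$; the bound $\|S_1-T_1\|=O(\delta)$ follows from the operator-norm estimates on $A-T_1$ and on $B$.

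The hardest part, and the main obstacle, is the matching upper bound $v(S_1)\leq 1$. For each $(u,u^*)\in\Pi(W)$ I would associate the auxiliary pair $((u,0),(u^*,u^*(y_1)\,y_2^*))$ in the type $1$ case and $((u,y_1^*(u)\,y_2),(u^*,0))$ in the type $\infty$ case; these pairs belong to $\Pi(X)$ because Lemma~\ref{charac}(b)---applied to $|\cdot|_{a^*}$ or to $|\cdot|_a$, using $|u^*(y_1)|\|y_2^*\|<\delta\leq b_0$, resp.\ $|y_1^*(u)|\|y_2\|<\delta\leq b_0$---gives the required normalization, while the pairing condition reduces to $u^*(u)=1$. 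A short calculation then shows that $u^*(S_1u)$ coincides exactly with the value of $\widetilde S$ against this auxiliary pair, so $v(\widetilde S)=1$ forces $|u^*(S_1u)|\leq 1$, and therefore $v(S_1)=1$ is attained at $(y_1,y_1^*)$. The technical obstruction is precisely the construction of these auxiliary pairs: without the type dichotomy one cannot perturb only the $Z$- or $Z^*$-coordinate of a pair in $\Pi(X)$ while keeping it on the unit sphere, and the clean reduction of $|u^*(S_1u)|$ to a numerical-radius estimate on $X$ collapses.
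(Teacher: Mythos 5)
Your argument is correct, and its skeleton coincides with the paper's: lift $T$ to $\widetilde T(w,z)=(Tw,0)$ (the paper quotes Lemma \ref{radius} for $v(\widetilde T)=v(T)$, which you re-derive from Lemma \ref{state0}), apply the BPBp-nu of $X$, decompose, and then run the same rigidity step, using the type-$1$ constant $K$ of $|\cdot|_a$ or of $|\cdot|_{a^*}$ together with $\|y_2\|,\|y_2^*\|<\delta$ to force $y_2=0$ (type $1$) or $y_2^*=0$ (type $\infty$), so that $(y_1,y_1^*)\in\Pi(W)$. Where you genuinely diverge is the endgame. The paper keeps the plain restriction $S(w):=\widetilde S_1(w,0)$, for which $v(S)\le 1$ is immediate, and then extracts $|y_1^*(Sy_1)|=1$ from an equality-in-convex-combination argument: the leftover coordinate ($z_1$ or $z_1^*$) is rescaled by $\rho=b_0/\eps$ using Lemma \ref{charac}(b), the attaining element is written as a convex combination of $(w_1,0)$ and $(w_1,\rho z_1)$ (or dually), and both terms being bounded by $v(\widetilde S)=1$ forces the value at $(w_1,0)$ to be $1$. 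You instead absorb the leftover coordinate into a rank-one correction of the restriction (in the spirit of \eqref{eq:S1} in the proof of Theorem \ref{range1}), so that $y_1^*(S_1y_1)=\widetilde y^*(\widetilde S\widetilde y)$ holds exactly; the price is that $v(S_1)\le 1$ is no longer trivial, and you recover it by tilting each $(u,u^*)\in\Pi(W)$ into an auxiliary pair of $\Pi(X)$ (perturbing the functional by $u^*(y_1)y_2^*$, resp.\ the point by $y_1^*(u)y_2$), normalized again via the $b_0$ of Lemma \ref{charac}(b), against which $\widetilde S$ reproduces $u^*(S_1u)$. Both routes use exactly the same structural inputs (Lemmas \ref{charac} and \ref{state0}, inequality \eqref{ine1}); yours attains the numerical radius by construction at the cost of the auxiliary-pair normalization and a slightly larger ($O(\delta)$, still harmless after rescaling $\delta$ against $\eps$, $K$, $b_0$) perturbation, while the paper's keeps the operator untouched and lets the convexity argument do the work.
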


We will profusely use in this section the following result which is a particular case of  \cite[Lemma 3.3]{CMM}.

\begin{lemma} \label{radius} Let $W$, $Z$ be Banach spaces and let $|\cdot|_a$ be an absolute norm. Given $T \in \mathcal{L}(W)$, we define $\widetilde{T} \in \mathcal{L}(W \oplus_a Z)$ by $\widetilde{T}(w, z) := (Tw, 0)$ for every $(w, z) \in W \oplus_a Z$. Then $v(\widetilde{T}) = v(T)$ and $\|\widetilde{T}\|=\|T\|$.
\end{lemma}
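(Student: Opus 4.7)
The plan is to prove both equalities by establishing each inequality separately, relying only on the defining properties of absolute sums, on inequality~\eqref{ine1}, and on Lemma~\ref{state0}. I will freely use that the dual norm $|\cdot|_{a^*}$ is again absolute, which, combined with the identification $[W\oplus_a Z]^*=W^*\oplus_{a^*}Z^*$ and the homogeneity of the norm, gives $\|(w^*,0)\|_{a^*}=\|w^*\|$ for every $w^*\in W^*$ (in particular $|(1,0)|_{a^*}=1$ since $|\cdot|_{a^*}$ is absolute).

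For the identity $\|\widetilde T\|=\|T\|$, one direction comes from restricting $\widetilde T$ to the isometric copy $\{(w,0)\colon w\in W\}$ of $W$, giving $\|\widetilde T\|\geq\|T\|$. For the reverse, for any $(w,z)\in B_{W\oplus_a Z}$ the identity $\widetilde T(w,z)=(Tw,0)$ together with \eqref{ine1} yields
\begin{equation*}
\|\widetilde T(w,z)\|_a=\|(Tw,0)\|_a=\|Tw\|\leq\|T\|\|w\|\leq\|T\|\|(w,z)\|_a\leq\|T\|.
\end{equation*}

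For the inequality $v(\widetilde T)\geq v(T)$, I would embed $\Pi(W)$ into $\Pi(W\oplus_a Z)$ via $(w,w^*)\mapsto((w,0),(w^*,0))$. The pair indeed lies in $\Pi(W\oplus_a Z)$, since $\|(w,0)\|_a=\|w\|=1$, $\|(w^*,0)\|_{a^*}=\|w^*\|=1$ and $\langle(w,0),(w^*,0)\rangle=w^*(w)=1$. Moreover
\begin{equation*}
\langle\widetilde T(w,0),(w^*,0)\rangle=w^*(Tw),
\end{equation*}
so taking supremum over $\Pi(W)$ gives $v(T)\leq v(\widetilde T)$.

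The reverse inequality $v(\widetilde T)\leq v(T)$ is the genuine content of the lemma, and is where Lemma~\ref{state0} enters. Starting from an arbitrary $((w,z),(w^*,z^*))\in\Pi(W\oplus_a Z)$, Lemma~\ref{state0} yields $w^*(w)=\|w^*\|\|w\|$, and the quantity to estimate is
\begin{equation*}
|\langle\widetilde T(w,z),(w^*,z^*)\rangle|=|w^*(Tw)|.
\end{equation*}
If either $w=0$ or $w^*=0$, this vanishes and is trivially bounded by $v(T)$. Otherwise, setting $\hat w:=w/\|w\|$ and $\hat w^*:=w^*/\|w^*\|$, the identity $w^*(w)=\|w^*\|\|w\|$ forces $\hat w^*(\hat w)=1$, so $(\hat w,\hat w^*)\in\Pi(W)$; then
\begin{equation*}
|w^*(Tw)|=\|w^*\|\|w\|\,|\hat w^*(T\hat w)|\leq\|w^*\|\|w\|\,v(T)\leq v(T),
\end{equation*}
the last step using \eqref{ine1} to deduce $\|w\|\leq 1$ and $\|w^*\|\leq 1$. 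Taking supremum over $\Pi(W\oplus_a Z)$ completes the proof. The one subtle step is the appeal to Lemma~\ref{state0}: without it, a generic pairing $(w^*,z^*)(w,z)=1$ need not decouple into norming pairs, and one could not normalize to obtain a genuine element of $\Pi(W)$.
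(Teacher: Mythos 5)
Your proof is correct and complete. Note, however, that the paper does not actually prove this lemma: it is simply quoted as a particular case of \cite[Lemma 3.3]{CMM}, so there is no internal proof to compare against. Your argument supplies a valid self-contained justification using exactly the tools the paper has already set up: inequality \eqref{ine1} for both $\|\widetilde T\|\leq\|T\|$ and the normalization bounds $\|w\|\leq 1$, $\|w^*\|\leq 1$; the identity $\|(w^*,0)\|_{a^*}=\|w^*\|$ (which follows from $|\cdot|_{a^*}$ being absolute) for the embedding $\Pi(W)\hookrightarrow\Pi(W\oplus_a Z)$; and, crucially, Lemma~\ref{state0} to decouple a generic $((w,z),(w^*,z^*))\in\Pi(W\oplus_a Z)$ into the norming pair $(w/\|w\|,\,w^*/\|w^*\|)\in\Pi(W)$, which is indeed the only nontrivial step in showing $v(\widetilde T)\leq v(T)$. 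The degenerate cases $w=0$ and $w^*=0$ are handled correctly since the pairing then vanishes. This is the natural argument and, as far as one can tell, essentially the one underlying the cited result in \cite{CMM}.
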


We are now able to provide the proof of Theorem \ref{numericalradius1}.

\begin{proof}[Proof of Theorem \ref{numericalradius1}] Let $\eps \in (0, 1)$ be given and suppose that $X$ has the BPBp-nu with some function $\eta(\eps) > 0$. Consider $Z$ to be a Banach space with $X = W \oplus_a Z$. We will prove that $W$ satisfies the BPBp-nu with $\eta$. Let $T \in \mathcal{L}(W)$ with $v(T) = 1$ and $(w_0, w_0^*) \in \Pi(W)$ be such that
	\begin{equation*}
	|w_0^*(Tw_0)| > 1 - \eta(\eps).
	\end{equation*}
Consider $\widetilde{T} \in \mathcal{L}(W \oplus_a Z)$ to be defined by $\widetilde{T}(w, z) = (Tw, 0)$ for every $(w, z) \in W \oplus_a Z$. Lemma \ref{radius} gives $v(\widetilde{T}) = v(T) = 1$. Also,
\begin{equation*}
|\langle \widetilde{T}(w_0, 0), (w_0^*, 0) \rangle| = |w_0^*(Tw_0)| > 1 - \eta(\eps).
\end{equation*}
Since $((w_0, 0), (w_0^*, 0)) \in \Pi(W \oplus_a Z)$, $v(\widetilde{T}) = 1$ and $W \oplus_a Z$ has the BPBp-nu with $\eta$, there are $\widetilde{S} \in \mathcal{L}(W \oplus_a Z)$ with $v(\widetilde{S}) = 1$ and $((w_1, z_1), (w_1^*, z_1^*)) \in \Pi(W \oplus_a Z)$ such that	
\begin{itemize}
\item[(a)] $|\langle \widetilde{S}(w_1, z_1), (w_1^*, z_1^*) \rangle | = 1$,
\item[(b)] $\|(w_1^*, z_1^*) - (w_0^*, 0)\|_{a^*} < \eps$,
\item[(c)] $\|(w_1, z_1) - (w_0, 0)\|_a < \eps$, and
\item[(d)] $\|\widetilde{S} - \widetilde{T}\| < \eps$.
\end{itemize}
\noindent
Write $\widetilde{S} = (\widetilde{S}_1, \widetilde{S}_2)$, where $\widetilde{S}_1: W \oplus_a Z \longrightarrow W$ and $\widetilde{S}_2: W \oplus_a Z \longrightarrow Z$. Define $S \in \mathcal{L}(W)$ by
\begin{equation*}
S(w) := \widetilde{S}_1(w, 0) \ \ \ \  \ (w \in W).
\end{equation*}
Then, for every $(w, w^*) \in \Pi(W)$, we have
\begin{equation} \label{eq1}
|w^*(Sw)| = |w^*(\widetilde{S}_1(w, 0))| = | \langle \widetilde{S}(w, 0), (w^*, 0) \rangle| \leq v(\widetilde{S}) = 1.
\end{equation}
So, $v(S) \leq 1$. Now, since $\|(w_1^*, z_1^*) - (w_0^*, 0)\|_{a^*} < \eps$ and $\|(w_1, z_1) - (w_0, 0)\|_a < \eps$, by using (\ref{ine1}), we get that
\begin{equation*}
\|w_1^* - w_0^*\| < \eps \ \ \ \mbox{and} \ \ \ \|w_1 - w_0\| < \eps.	
\end{equation*}
Moreover, for every $w \in S_W$, we have
\begin{align*}
\|Sw - Tw\| &= \|\widetilde{S}_1(w, 0) - Tw\| \\
&\leq \max \{ \|\widetilde{S}_1(w, 0) - Tw\|, \|\widetilde{S}_2(w, 0)\| \} \\
&= \|\widetilde{S}(w, 0) - \widetilde{T}(w, 0)\|_{\infty}
\leq \|\widetilde{S}(w, 0) - \widetilde{T}(w, 0)\|_a < \eps.
\end{align*}
So, $\|S - T\| < \eps$. It remains to prove that $w_1^*(w_1) = 1$ and $|w_1^*(Sw_1)| = 1$. To do so, we divide the proof in two cases.

\noindent
{\it Case 1:} Suppose that $\oplus_a$ is of type $\infty$. We will prove that $z_1^* = 0$. To do so, suppose that it is not true. By Lemma \ref{charac}.d, $\oplus_{a^*}$ is of type $1$. So, there is $K > 0$ such that
\begin{equation*}
\|w_1^*\| + K \|z_1^*\| \leq |(\|w_1^*\|, \|z_1^*\|)|_{a^*} = \|(w_1^*, z_1^*)\|_{a^*} = 1.
\end{equation*}
Notice that, since $\|(w_1, z_1) - (w_0, 0)\|_a < \eps$, we have that $\|z_1\| < \eps$. So, if $\eps \in (0, K)$, then
\begin{align*}
1 = w_1^*(w_1) + z_1^*(z_1) &\leq \|w_1^*\|\|w_1\| + \|z_1^*\|\|z_1\| \\
&< \|w_1^*\| + \eps \|z_1^*\| < \|w_1^*\| + K \|z_1^*\| \leq 1,
\end{align*}
which is a contradiction. So, $z_1^* = 0$. This implies that $(w_1, w_1^*) \in \Pi(W)$.

Now, by Lemma \ref{charac}.b, there is $b_0 > 0$ such that $|(1, b_0)|_a = 1$. Put $\rho = \frac{b_0}{\eps} > 0$. Then, $\|\rho z_1\| = \rho \|z_1\| < \rho \eps = b_0$, so 
\begin{equation*}
\|(w_1, \rho z_1)\|_a = |(\|w_1\|, \|\rho z_1\|)|_a \leq |(1, b_0)|_a = 1.
\end{equation*} 
Writing
\begin{equation*}
(w_1, z_1) = \left(1 - \frac{\eps}{b_0}\right) (w_1, 0) + \frac{\eps}{b_0} (w_1, \rho z_1)
\end{equation*}
we get that
\begin{equation*}
1 = \|(w_1, z_1)\|_a \leq \left(1 - \frac{\eps}{b_0} \right) \|(w_1, 0)\|_a + \frac{\eps}{b_0}\|(w_1, \rho z_1)\|_a \leq 1,
\end{equation*}
which implies that $\|(w_1, \rho z_1)\|_a = 1$. So, $((w_1, \rho z_1), (w_1^*, 0)) \in \Pi(W \oplus_a Z)$ and
\begin{equation*}
|\langle \widetilde{S}(w_1, \rho z_1), (w_1^*, 0) \rangle| \leq v(\widetilde{S}) = 1.
\end{equation*}
Therefore,
\begin{align*}
1 &= |\langle \widetilde{S}(w_1, z_1), (w_1^*, 0) \rangle| \\
&= \left| \left\langle \widetilde{S} \left(	\left(1 - \frac{\eps}{b_0}\right) (w_1, 0) + \frac{\eps}{b_0} (w_1, \rho z_1) \right), (w_1^*, 0) \right\rangle \right| \\
&\leq \left(1 - \frac{\eps}{b_0}\right) | \langle \widetilde{S}(w_1, 0), (w_1^*, 0) \rangle| + \frac{\eps}{b_0} | \langle \widetilde{S}(w_1, \rho z_1), (w_1^*, 0) \rangle| \leq 1.
\end{align*}
This shows that $|w_1^*(Sw_1)| = | \langle \widetilde{S}(w_1, 0), (w_1^*, 0) \rangle| = 1$. By (\ref{eq1}), we get that $v(S) = |w_1^*(Sw_1)| = 1$.

\noindent
{\it Case 2:} Now suppose that $\oplus_a$ is of type $1$. Since $\|(x_0^*, y_0^*) - (x, 0)\|_{a^*} < \eps$, we have that $\|y_0^*\| < \eps$. We will prove that $z_1 = 0$. Suppose not. By Lemma \ref{charac}.a, there is $K > 0$ such that
\begin{equation*}
\|w_1\| + K \|z_1\| \leq | (\|w_1\|, \|z_1\|)|_a = \|(w_1, z_1)\|_a = 1.
\end{equation*}
Therefore, if $\eps \in (0, K)$, then
\begin{align*}
1 = w_1^*(w_1) + z_1^*(z_1) &\leq \|w_1^*\|\|w_1\| + \|z_1^*\|\|z_1\| \\
&< \|w_1\| + \eps \|z_1\| < \|w_1\| + K \|z_1\| \leq 1,
\end{align*}
which is a contradiction. So, $z_1 = 0$ and $(w_1, w_1^*) \in \Pi(W)$.

By Lemma \ref{charac}.c, $\oplus_{a^*}$ is of type $\infty$. So, there is $b_0 > 0$ such that $|(1, b_0)|_{a^*} = 1$. Put $\rho = \frac{b_0}{\eps} > 0$. Then, $\|\rho z_1^*\| = \rho \|z_1^*\| < \rho \eps = b_0$. So,
\begin{equation*}
\|(w_1^*, \rho z_1^*)\|_{a^*} = |(\|w_1^*\|, \|\rho z_1^*\|)|_{a^*} \leq |(1, b_0)|_{a^*} = 1.
\end{equation*}
Write
\begin{equation*}
(w_1^*, z_1^*) = \left( 1 - \frac{\eps}{b_0} \right) (w_1^*, 0) + \frac{\eps}{b_0} (w_1^*, \rho z_1^*).
\end{equation*}
So, $\|(w_1^*, \rho z_1^*)\|_{a^*} = 1$ and then $((w_1, 0), (w_1^*, \rho z_1^*)) \in \Pi(W \oplus_a Z)$. This implies that
\begin{equation*}
| \langle \widetilde{S} (w_1, 0), (w_1^*, \rho z_1^*) \rangle| \leq v(\widetilde{S}) = 1
\end{equation*}
and
\begin{equation*}
1 = | \langle \widetilde{S}(w_1, 0), (w_1^*, z_1^*) \rangle|
\leq \left(1 - \frac{\eps}{b_0} \right) |(w_1^*, 0)(\widetilde{S}(w_1, 0))| + \frac{\eps}{b_0} |(w_1^*, \rho z_1^*)(\widetilde{S}(w_1, 0))| \leq 1.	
\end{equation*}
So, $|w_1^*(Sw_1)| = | \langle \widetilde{S}(w_1, 0), (w_1^*, 0) \rangle| = 1$ and this finishes the proof.
\end{proof}

We would like to point out that the same proof of Theorem \ref{numericalradius1} works also for compact operators and we have the analogous result for the BPBp-nu for compact operators.

\begin{prop} \label{numericalradius2} Let $X$ be a Banach space and let $W$ be an absolute summand of type 1 or $\infty$ of $X$. If $X$ has the BPBp-nu for compact operators, so does $W$.		
\end{prop}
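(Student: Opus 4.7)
My plan is to mimic the proof of Theorem \ref{numericalradius1} verbatim, while tracking that each operator constructed along the way is compact whenever we start from a compact $T$. So, given $\eps>0$ and a compact $T\in\mathcal{L}(W)$ with $v(T)=1$ and $(w_0,w_0^*)\in\Pi(W)$ satisfying $|w_0^*(Tw_0)|>1-\eta(\eps)$ (where $\eta$ is the BPBp-nu function for compact operators on $X$), I would define $\widetilde T\in\mathcal{L}(W\oplus_a Z)$ by $\widetilde T(w,z):=(Tw,0)$, where $Z$ is such that $X=W\oplus_a Z$.

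The first thing to check is that $\widetilde T$ is compact. This is immediate: by \eqref{ine1} we have $\|w\|\leq \|(w,z)\|_a$, so the projection $\pi_W\colon W\oplus_a Z\to W$ is continuous, and $\widetilde T = \iota\circ T\circ \pi_W$ where $\iota\colon W\hookrightarrow W\oplus_a Z$ is the canonical embedding; compactness of $T$ yields compactness of $\widetilde T$. Lemma \ref{radius} then gives $v(\widetilde T)=v(T)=1$, and clearly $|\langle \widetilde T(w_0,0),(w_0^*,0)\rangle|=|w_0^*(Tw_0)|>1-\eta(\eps)$ with $((w_0,0),(w_0^*,0))\in\Pi(W\oplus_a Z)$. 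Applying the BPBp-nu for compact operators of $X$ to $\widetilde T$ produces a \emph{compact} $\widetilde S\in\mathcal{L}(W\oplus_a Z)$ with $v(\widetilde S)=1$ and a point $((w_1,z_1),(w_1^*,z_1^*))\in\Pi(W\oplus_a Z)$ satisfying conditions (a)--(d) from the proof of Theorem \ref{numericalradius1}.

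Next I would define $S\in\mathcal{L}(W)$ by $S(w):=\widetilde S_1(w,0)$, where $\widetilde S=(\widetilde S_1,\widetilde S_2)$. Writing $S=\pi_W\circ\widetilde S\circ \iota$, compactness of $\widetilde S$ together with continuity of $\pi_W$ and $\iota$ gives that $S$ is compact. All remaining verifications—that $v(S)\leq 1$, that $\|w_1^*-w_0^*\|<\eps$, $\|w_1-w_0\|<\eps$, and $\|S-T\|<\eps$, and then the case analysis showing $z_1^*=0$ (type $\infty$) or $z_1=0$ (type $1$) via the geometric constants from Lemma \ref{charac}.a--d, and finally that $|w_1^*(Sw_1)|=v(S)=1$—are identical word for word to those carried out in the proof of Theorem \ref{numericalradius1}, since none of them used anything beyond the norm, numerical radius, and duality structure of $\widetilde S$ and $\widetilde T$.

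The only genuinely new content is thus the compactness bookkeeping; no obstacle is expected, because in both directions (lifting $T$ to $\widetilde T$ and restricting $\widetilde S$ to $S$) the operators in question are compositions of the given compact operator with the bounded projection and embedding associated to the absolute sum decomposition.
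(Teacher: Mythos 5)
Your proposal is correct and follows exactly the route the paper intends: the paper proves this proposition simply by observing that the proof of Theorem \ref{numericalradius1} goes through verbatim for compact operators, which is precisely your argument, with the (routine but worthwhile) extra bookkeeping that $\widetilde{T}=\iota\circ T\circ\pi_W$ and $S=\pi_W\circ\widetilde{S}\circ\iota$ are compact since $\pi_W$ and $\iota$ are bounded.
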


Arguing as in Theorem \ref{numericalradius1}, we get the following result for the weak BPBp-nu. Recall that the only difference between the BPBp-nu and the weak BPBp-nu is the normalization of the numerical radius of the operator by its numerical radius (see Definition \ref{defBPBp-nu}). We include an sketch of the proof for completeness.

\begin{prop} \label{weaknumericalradius1} Let $X$ be a Banach space and let $W$ be an absolute summand of type 1 or $\infty$ of $X$. If $X$ has the weak BPBp-nu, so does $W$.	
\end{prop}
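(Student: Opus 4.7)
The plan is to mimic the proof of Theorem \ref{numericalradius1} almost verbatim, with one crucial bookkeeping change: every occurrence of the value $1 = v(\widetilde{S})$ in that argument must be systematically replaced by $v(\widetilde{S})$, which in the weak setting need not equal $1$. Concretely, given $\eps \in (0,1)$, I take $\eta$ to be a weak BPBp-nu function for $X = W \oplus_a Z$, pick $T \in \mathcal{L}(W)$ with $v(T) = 1$ and $(w_0, w_0^*) \in \Pi(W)$ satisfying $|w_0^*(Tw_0)| > 1 - \eta(\eps)$, and lift to $\widetilde{T}(w,z) := (Tw,0)$. By Lemma \ref{radius}, $v(\widetilde{T}) = 1$, and the pair $((w_0,0),(w_0^*,0)) \in \Pi(X)$ realizes a value strictly greater than $1 - \eta(\eps)$. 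I then invoke the weak BPBp-nu of $X$ to produce $\widetilde{S} \in \mathcal{L}(X)$ and $((w_1,z_1),(w_1^*,z_1^*)) \in \Pi(X)$ with
\[
|\langle \widetilde{S}(w_1,z_1),(w_1^*,z_1^*)\rangle| = v(\widetilde{S}),
\]
together with the $\eps$-closeness inequalities (b), (c), (d) recorded in the proof of Theorem \ref{numericalradius1}.

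Next, I define $S \in \mathcal{L}(W)$ by $S(w) := \widetilde{S}_1(w,0)$ where $\widetilde{S} = (\widetilde{S}_1, \widetilde{S}_2)$. Exactly as in \eqref{eq1}, for every $(w, w^*) \in \Pi(W)$ one has $|w^*(Sw)| = |\langle \widetilde{S}(w,0),(w^*,0)\rangle| \leq v(\widetilde{S})$, so $v(S) \leq v(\widetilde{S})$. The bounds $\|w_1 - w_0\| < \eps$, $\|w_1^* - w_0^*\| < \eps$ and $\|S - T\| < \eps$ drop out of \eqref{ine1} and the decomposition of $\|\widetilde{S} - \widetilde{T}\|$ without any alteration. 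The type-based dichotomy from Theorem \ref{numericalradius1} transfers verbatim as well: in the type $\infty$ case Lemma \ref{charac} forces $z_1^* = 0$ (via the type $1$ character of $|\cdot|_{a^*}$); in the type $1$ case the same lemma forces $z_1 = 0$. Either way $(w_1, w_1^*) \in \Pi(W)$, and the convex-combination trick with $\rho = b_0/\eps$, now using $v(\widetilde{S})$ in place of $1$ as the common upper bound on both endpoints, yields
\[
|w_1^*(Sw_1)| = |\langle \widetilde{S}(w_1,0),(w_1^*,0)\rangle| = v(\widetilde{S}).
\]

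The final, genuinely new step is the sandwich
\[
|w_1^*(Sw_1)| \leq v(S) \leq v(\widetilde{S}) = |w_1^*(Sw_1)|,
\]
which collapses to $v(S) = |w_1^*(Sw_1)|$, delivering precisely the weak BPBp-nu conclusion for $W$. I do not foresee any real obstacle; the only subtlety worth flagging is that the convex-combination step still forces equality at the endpoints, because both are dominated by $v(\widetilde{S})$ while the affine average equals $v(\widetilde{S})$, so the bookkeeping goes through without needing the $v(S) = 1$ normalization that was available in the BPBp-nu case.
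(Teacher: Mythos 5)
Your proposal is correct and follows essentially the same route as the paper: lift $T$ via Lemma \ref{radius}, apply the weak BPBp-nu of $W\oplus_a Z$, use the type dichotomy of Lemma \ref{charac} to kill $z_1^*$ (type $\infty$) or $z_1$ (type $1$), and run the convex-combination trick with $v(\widetilde{S})$ replacing $1$ as the upper bound, finishing with the sandwich $|w_1^*(Sw_1)|\leq v(S)\leq v(\widetilde{S})=|w_1^*(Sw_1)|$. This is exactly the paper's sketch, including the final observation that $|w_1^*(Sw_1)|=v(\widetilde{S})\geq v(S)$ forces $|w_1^*(Sw_1)|=v(S)$.
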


\begin{proof} Let $\eps \in (0, 1)$ be given and consider the weak BPBp-nu function $\eta(\eps) > 0$ for the Banach space $X = W \oplus_a Z$. Let $T \in \mathcal{L}(W)$ with $v(T) = 1$ and $(w_0, w_0^*) \in \Pi(W)$ be such that $|w_0^*(Tw_0)| > 1 - \eta(\eps)$. Using the same notation of Theorem \ref{numericalradius1} and applying our hypothesis, there are $\widetilde{S} \in \mathcal{L}(W \oplus_a Z)$  and $((w_1, z_1), (w_1^*, z_1^*)) \in \Pi(W \oplus_a Z)$ such that $|\langle \widetilde{S}(w_1, z_1), (w_1^*, z_1^*) \rangle | = v(\widetilde{S})$, $\|(w_1^*, z_1^*) - (w_0^*, 0)\|_{a^*} < \eps$, $\|(w_1, z_1) - (w_0, 0)\|_a < \eps$, and $\|\widetilde{S} - \widetilde{T}\| < \eps$. Then, $\|w_1^* - w_0^*\| < \eps$ and $\|w_1 - w_0\| < \eps$. Writing again $\widetilde{S} = (\widetilde{S}_1, \widetilde{S}_2)$, we define $S \in \mathcal{L}(W)$ by
$S(w) := \widetilde{S}_1(w, 0)$ for every $w \in W$. Then, $v(S) \leq v(\widetilde{S})$ and $\|S - T\| < \eps$.

Suppose that $\oplus_a$ is of type $\infty$. As in Theorem \ref{numericalradius1} (Case 1), we have that $z_1^* = 0$ and then $(w_1, w_1^*) \in \Pi(W)$. Since $\oplus_a$ is of type $\infty$, there is $b_0 > 0$ such that $|(1, b_0)|_a = 1$. Put $\rho = \frac{b_0}{\eps} > 0$. Since $\|z_1\| < \eps$, we have that $\|(w_1, \rho z_1)\|_a \leq |(1, b_0)|_a = 1$. Then, $((w_1, \rho z_1), (w_1^*, 0)) \in \Pi(W \oplus_a Z)$. This implies that $|\langle \widetilde{S}(w_1, \rho z_1), (w_1^*, 0) \rangle| \leq v(\widetilde{S})$. Therefore,
\begin{align*}
	v(\widetilde{S}) &= |\langle \widetilde{S}(w_1, z_1), (w_1^*, 0) \rangle| \\
	&= \left| \left\langle \widetilde{S} \left(	\left(1 - \frac{\eps}{b_0}\right) (w_1, 0) + \frac{\eps}{b_0} (w_1, \rho z_1) \right), (w_1^*, 0) \right\rangle \right| \\
	&\leq \left(1 - \frac{\eps}{b_0}\right) | \langle \widetilde{S}(w_1, 0), (w_1^*, 0) \rangle| + \frac{\eps}{b_0} | \langle \widetilde{S}(w_1, \rho z_1), (w_1^*, 0) \rangle| \\
	&= \left(1 - \frac{\eps}{b_0}\right) |w_1^*(Sw_1)| + \frac{\eps}{b_0} | \langle \widetilde{S}(w_1, \rho z_1), (w_1^*, 0) \rangle| \leq v(\widetilde{S}).
\end{align*}
This shows that $|w_1^*(Sw_1)| = v(\widetilde{S}) \geq v(S)$. So, $|w_1^*(Sw_1)| = v(S)$ and $W$ has the weak BPBp-nu.

Now suppose that $\oplus_a$ is of type 1. As in Theorem \ref{numericalradius1} (Case 2), $z_1 = 0$ and then $(w_1, w_1^*) \in \Pi(W)$. Since $\oplus_{a^*}$ is of type $\infty$ (see Lemma \ref{charac}.(c)), there is $b_0 > 0$ such that $|(1, b_0)|_{a^*} = 1$. Put $\rho = \frac{b_0}{\eps} > 0$. Since $\|z_1^*\| < \eps$, we have that $\|(w_1^*, \rho z_1^*)\|_{a^*} \leq |(1, b_0)|_{a^*} = 1$. Then $((w_1, 0), (w_1^*, \rho z_1^*)) \in \Pi(W \oplus_a Z)$. This implies that $|\langle \widetilde{S}(w_1, 0), (w_1^*, \rho z_1^*) \rangle | \leq v(\widetilde{S})$. Therefore,
\begin{align*}
v(\widetilde{S}) &= | \langle \widetilde{S} (w_1, 0), (w_1^*, z_1^*) \rangle| \\
&= \left| \left \langle \widetilde{S}(w_1, 0), \left( 1 - \frac{\eps}{b_0} \right) (w_1^*, 0) + \frac{\eps}{b_0} (w_1^*, \rho z_1^*) \right \rangle \right| \\
&\leq \left(1 - \frac{\eps}{b_0} \right) | \langle  \widetilde{S}(w_1, 0), (w_1^*, 0) \rangle| + \frac{\eps}{b_0} | \langle \widetilde{S}(w_1, 0), (w_1^*, \rho z_1^* ) \rangle | \\
&= \left(1 - \frac{\eps}{b_0} \right) | w_1^*(Sw_1)| + \frac{\eps}{b_0} | \langle \widetilde{S}(w_1, 0), (w_1^*, \rho z_1^* ) \rangle | \leq v(\widetilde{S}).
\end{align*} 	
This shows that $|w_1^*(Sw_1)| = v(\widetilde{S}) \geq v(S)$. So, $|w_1^*(Sw_1)| = v(S)$ and $W$ has the weak BPBp-nu.	
\end{proof}

Again, the above proof can be adapted to compact operators to get the following result.

\begin{prop} Let $X$ be a Banach space and let $W$ be an absolute summand of type 1 or $\infty$ of $X$. If $X$ has the weak BPBp-nu for compact operators, so does $W$.	
\end{prop}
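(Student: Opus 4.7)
My plan is to mimic the proof of Proposition \ref{weaknumericalradius1} essentially verbatim, adding only the observation that the "lift-and-project" construction preserves compactness in both directions. Concretely, let $Z$ be such that $X=W\oplus_a Z$, and let $\eta(\cdot)$ be the weak BPBp-nu function for compact operators on $X$. Given a compact $T\in\mathcal{L}(W)$ with $v(T)=1$ and $(w_0,w_0^*)\in\Pi(W)$ with $|w_0^*(Tw_0)|>1-\eta(\eps)$, I will define $\widetilde{T}(w,z):=(Tw,0)$ as before. The first key observation is that $\widetilde{T}$ is compact: it factors as $\widetilde{T}=j_W\circ T\circ\pi_W$, where $\pi_W:W\oplus_a Z\to W$ is the natural projection and $j_W:W\to W\oplus_a Z$ is the natural inclusion (both continuous by \eqref{ine1}), so composition with the compact operator $T$ keeps $\widetilde{T}$ compact. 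By Lemma \ref{radius}, $v(\widetilde{T})=v(T)=1$, and the pair $((w_0,0),(w_0^*,0))\in\Pi(W\oplus_a Z)$ realizes a numerical value exceeding $1-\eta(\eps)$.

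Next, applying the weak BPBp-nu for compact operators of $X$ produces a \emph{compact} operator $\widetilde{S}\in\mathcal{L}(W\oplus_a Z)$ and $((w_1,z_1),(w_1^*,z_1^*))\in\Pi(W\oplus_a Z)$ satisfying the four conclusions (a)--(d) from the proof of Theorem \ref{numericalradius1}. Writing $\widetilde{S}=(\widetilde{S}_1,\widetilde{S}_2)$ and defining $S(w):=\widetilde{S}_1(w,0)$, I note that $S=\pi_W\circ\widetilde{S}\circ j_W$ is again a composition involving the compact operator $\widetilde{S}$, hence $S$ is compact. The estimates $\|w_1^*-w_0^*\|<\eps$, $\|w_1-w_0\|<\eps$, and $\|S-T\|<\eps$ are obtained exactly as in Proposition \ref{weaknumericalradius1} using \eqref{ine1}, and the inequality $v(S)\leq v(\widetilde{S})$ follows from the same computation \eqref{eq1} used in Theorem \ref{numericalradius1}.

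The remaining work is precisely the case analysis already carried out in Proposition \ref{weaknumericalradius1}: if $\oplus_a$ is of type $\infty$, apply Lemma \ref{charac}(d) to the dual absolute norm to force $z_1^*=0$, then use Lemma \ref{charac}(b) with $\rho=b_0/\eps$ and the convex combination $(w_1,z_1)=(1-\eps/b_0)(w_1,0)+(\eps/b_0)(w_1,\rho z_1)$ to conclude $|w_1^*(Sw_1)|=v(\widetilde{S})\geq v(S)$; if $\oplus_a$ is of type $1$, apply Lemma \ref{charac}(a) to force $z_1=0$, then use Lemma \ref{charac}(c) with $\rho=b_0/\eps$ and the convex combination on the dual side to reach the same conclusion. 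In either case, $(w_1,w_1^*)\in\Pi(W)$ and $|w_1^*(Sw_1)|=v(S)$, proving that $W$ has the weak BPBp-nu for compact operators with the same function $\eta$.

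The main (and only) subtlety is the compactness bookkeeping; once one checks that both the lift $T\mapsto\widetilde{T}$ and the projection $\widetilde{S}\mapsto S$ preserve compactness—which is immediate from their factorizations through the continuous inclusion and projection—the analytic content of the proof is identical to that of Proposition \ref{weaknumericalradius1}, so I anticipate no genuine obstacle.
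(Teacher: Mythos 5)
Your proposal is correct and is exactly the route the paper takes: the paper proves Proposition \ref{weaknumericalradius1} and then observes that the same argument adapts to compact operators, which is precisely your observation that the lift $T\mapsto\widetilde{T}$ and the projection $\widetilde{S}\mapsto S$ preserve compactness while all the analytic estimates and the type $1$/type $\infty$ case analysis carry over unchanged.
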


Next, we have interest to investigate the density of the set of numerical radius attaining operators. To do so, we will prove the following easy lemma which says that, in order to prove the denseness of the set $\NRA(X)$ for a Banach space $X$, it is enough to consider operators with numerical radius one. Observe that $T \in \mathcal{L}(X)$ attains its numerical radius if and only if the operator $\lambda T$ does for every $\lambda \in \R$.

\begin{lemma} \label{lemmaNRA} Let $X$ be a Banach space. The following statements are equivalents.
\begin{itemize}
\item[(a)] The set $\NRA(X)$ is dense in $\mathcal{L}(X)$.
\item[(b)] For every $T \in \mathcal{L}(X)$ with $v(T) = 1$, there is a sequence $\{S_n\} \subset \NRA(X)$ with $v(S_n) = 1$ for every $n \in \N$ and such that $S_n \longrightarrow T$ in norm.
\item[(c)] For every $T \in \mathcal{L}(X)$ with $v(T) = 1$ and every $\eps>0$, there is $S \in \mathcal{L}(X)$ with $v(S) = 1$ such that $\|S - T\| < \eps$.	
\end{itemize}	
\end{lemma}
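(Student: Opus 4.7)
The plan is to establish the cycle $(a)\Rightarrow(b)\Rightarrow(c)\Rightarrow(a)$, exploiting two elementary facts already flagged in the text: $\NRA(X)$ is invariant under multiplication by any scalar $\lambda\in\R$ (so, for $\lambda\neq 0$, $T\in\NRA(X)$ iff $\lambda T\in\NRA(X)$), and the seminorm $v(\cdot)$ is $1$-Lipschitz with respect to the operator norm, since $v(S)-v(T)\leq v(S-T)\leq \|S-T\|$.

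For $(a)\Rightarrow(b)$, given $T\in\mathcal{L}(X)$ with $v(T)=1$, I would pick, by density, a sequence $\{T_n\}\subset \NRA(X)$ with $T_n\to T$ in norm. Continuity of $v$ yields $v(T_n)\to v(T)=1$, so $v(T_n)>0$ for $n$ large, and the rescaled operators $S_n:=T_n/v(T_n)$ still belong to $\NRA(X)$ (by the scalar-invariance), satisfy $v(S_n)=1$, and converge in norm to $T$ since $T_n\to T$ and $v(T_n)\to 1$. The implication $(b)\Rightarrow(c)$ is immediate: given $\eps>0$, take any term of the sequence with $\|S_n-T\|<\eps$.

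For $(c)\Rightarrow(a)$, fix an arbitrary $T\in \mathcal{L}(X)$ and $\eps>0$. If $v(T)=0$, then every $(x,x^*)\in\Pi(X)$ witnesses $|x^*(Tx)|=0=v(T)$, so $T\in \NRA(X)$ itself and nothing has to be done. If $v(T)>0$, the operator $T':=T/v(T)$ has $v(T')=1$; applying $(c)$ with $\eps/v(T)$ in place of $\eps$ produces $S'\in\NRA(X)$ with $v(S')=1$ and $\|S'-T'\|<\eps/v(T)$, and then $S:=v(T)S'\in\NRA(X)$ with $\|S-T\|=v(T)\|S'-T'\|<\eps$, completing the density. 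The only mild subtlety in the whole argument is the case $v(T)=0$ in $(c)\Rightarrow(a)$, and the verification that (c) should be read as providing $S\in\NRA(X)$ (otherwise the statement is vacuous, as one could always take $S=T$); this is consistent with the role of the lemma as a normalization device for proving density of numerical radius attaining operators.
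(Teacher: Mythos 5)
Your proof is correct and follows essentially the same route as the paper's: rescale approximating numerical radius attaining operators by their numerical radius (using that $v$ is $1$-Lipschitz for the operator norm and that $\NRA(X)$ is stable under nonzero scalar multiples), and treat $v(T)=0$ as the trivial case; running the cycle through (c)$\Rightarrow$(a) rather than (b)$\Rightarrow$(a) is only a cosmetic difference. Your reading of (c) as requiring $S\in\NRA(X)$ is also the intended one, consistent with how the paper uses the lemma.
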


\begin{proof} (a) $\Rightarrow$ (b). Let $T \in \mathcal{L}(X)$ with $v(T) = 1$. By hypothesis, there is a sequence $\{S_n'\} \subset \NRA(X)$ such that $S_n' \longrightarrow T$ in norm. This implies that there is $n_0 \in \N$ such that $|v(S_n') - v(T)| \leq \|S_n' - T\| < 1$ for all $n \geq n_0$. Since $v(T) = 1$, we have that $v(S_n') > 0$ for all $n \geq n_0$. Consider then the sequence $(S_n)$ defined by $$S_n := \frac{1}{v(S_{n+n_0}')}S_{n+n_0}' \in \mathcal{L}(X) \qquad (n\in \N).$$ Then $\{S_n\} \subset \NRA(X)$, $v(S_n) = 1$ for all $n \in \N$ and, since $v(S_n') \longrightarrow v(T) = 1$, $S_n \longrightarrow T$ in norm. This proves (b).
	
\noindent
(b) $\Leftrightarrow$ (c) is immediate.

\noindent
(b) $\Rightarrow$ (a). Let $T \in \mathcal{L}(X)$ be given. If $v(T) = 0$, then $T$ attains its numerical radius and we are done. Otherwise, $v(T) \not= 0$ and we may consider the operator $T' := \frac{T}{v(T)}$ which satisfies that $v(T') = 1$. By hypothesis, there is a sequence $\{S_n\} \subset \NRA(X)$ with $v(S_n) = 1$ for all $n \in \N$ and such that $S_n \longrightarrow T'$ in norm. This implies that $v(T) S_n \longrightarrow v(T)T'=T$ and we are done since $v(T) S_n \in \NRA(X)$ for every $n \in \N$.
\end{proof}

Now we are ready to provide a result for the denseness of the operators which attain their numerical radius.

\begin{prop} \label{NRA1} Let $X$ be a Banach space and let $W$ be an absolute summand of $X$ of type 1 or $\infty$. If $\NRA(X)$ is dense in $\mathcal{L}(X)$, then $\NRA(W)$ is dense in $\mathcal{L}(W)$.
\end{prop}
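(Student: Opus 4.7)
Following the approach of Theorem \ref{numericalradius1}, I would begin by applying Lemma \ref{lemmaNRA} to reduce to the case $v(T)=1$. Writing $X=W\oplus_a Z$, lift $T$ to $\widetilde{T}\in\mathcal{L}(X)$ by $\widetilde{T}(w,z):=(Tw,0)$, so that $v(\widetilde{T})=1$ by Lemma \ref{radius}. Given $\eps\in(0,1)$, fix an auxiliary $\delta>0$ (its size to be chosen in terms of $\eps$ and of the structural constants $b_0$ and $K$ supplied by Lemma \ref{charac}). By the density hypothesis, pick $\widetilde{S}\in\NRA(X)$ with $\|\widetilde{S}-\widetilde{T}\|<\delta$, attaining its numerical radius at some $((w_1,z_1),(w_1^*,z_1^*))\in\Pi(X)$. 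Writing $\widetilde{S}=(\widetilde{S}_1,\widetilde{S}_2)$, define $S\in\mathcal{L}(W)$ by $S(w):=\widetilde{S}_1(w,0)$. Then $\|S-T\|<\delta$ follows from (\ref{ine1}) exactly as in Theorem \ref{numericalradius1}, and the observation that $((w,0),(w^*,0))\in\Pi(X)$ whenever $(w,w^*)\in\Pi(W)$ gives $v(S)\leq v(\widetilde{S})$.

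The core of the proof is to exhibit $(w_0,w_0^*)\in\Pi(W)$ with $|w_0^*(Sw_0)|=v(S)$, and I would do this by replicating the two-case analysis at the end of Theorem \ref{numericalradius1}. In the type-$\infty$ case, Lemma \ref{charac}(b) supplies $b_0>0$ with $|(1,b_0)|_a=1$ and Lemma \ref{charac}(d) gives $K>0$ with $\|w_1^*\|+K\|z_1^*\|\leq 1$. The key intermediate claims are $\|z_1\|<\eps$ and $z_1^*=0$; once they are established, Lemma \ref{state0} forces $\|w_1\|=\|w_1^*\|=1$ and $(w_1,w_1^*)\in\Pi(W)$, and the convex decomposition $(w_1,z_1)=(1-\eps/b_0)(w_1,0)+(\eps/b_0)(w_1,\rho z_1)$ with $\rho=b_0/\eps$, valid by the choice of $b_0$, yields $|w_1^*(Sw_1)|=v(\widetilde{S})\geq v(S)$, so $S$ attains $v(S)$ at $(w_1,w_1^*)$. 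The type-$1$ case is entirely symmetric via Lemma \ref{charac}(a) and (c), with intermediate claims $\|z_1^*\|<\eps$ and $z_1=0$.

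The main obstacle, and the only point at which the argument departs from Theorem \ref{numericalradius1}, is the derivation of those smallness statements. There they were free consequences of the BPBp anchor $(w_0,0)$; here no such anchor is available, so I would deduce them from the interplay of $\|\widetilde{S}-\widetilde{T}\|<\delta$ and $v(\widetilde{S})\geq 1-\delta$. Concretely, since $\widetilde{T}$ annihilates $\{0\}\times Z$, one has $\|\widetilde{S}(0,z)\|_a\leq\delta\|z\|$; combining this with the expansion $\widetilde{S}(w_1,z_1)=\widetilde{S}(w_1,0)+\widetilde{S}(0,z_1)$, the numerical radius identity $v(\widetilde{S})=|\langle\widetilde{S}(w_1,z_1),(w_1^*,z_1^*)\rangle|$, the $\Pi(X)$ relation $w_1^*(w_1)+z_1^*(z_1)=1$, and the appropriate inequality from Lemma \ref{charac} should produce arithmetic bounds on $\|z_1\|$ and $\|z_1^*\|$ that vanish with $\delta$. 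A sufficiently small choice of $\delta$ (relative to $\eps$, $K$, and $b_0$) then closes the gap, and the remainder of the proof is a direct transcription of Theorem \ref{numericalradius1}.
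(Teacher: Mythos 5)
Your overall architecture is the right one and matches the paper's: lift $T$ to $\widetilde{T}$, use density of $\NRA(X)$ to find $\widetilde{S}$ attaining its numerical radius at some $((w_1,z_1),(w_1^*,z_1^*))\in\Pi(X)$ with $\|\widetilde{S}-\widetilde{T}\|$ small, restrict to get $S$, and then kill the $Z$-component of the functional (type $\infty$) or of the point (type $1$) before a $b_0$-convexity argument transfers the attainment; you also correctly identify that the real difficulty is the absence of a BPBp anchor. However, the mechanism you propose for the key smallness statements does not work as stated. First, it is false that both $\|z_1\|$ and $\|z_1^*\|$ can be forced to vanish with $\delta$: in an $\ell_\infty$-sum, if $T$ already attains its numerical radius at $(w_0,w_0^*)$ then $\widetilde{S}=\widetilde{T}$ is an admissible choice and it attains its numerical radius at $((w_0,z),(w_0^*,0))$ for \emph{every} $z\in B_Z$, so the pair handed to you may have $\|z_1\|=1$ no matter how small $\delta$ is; dually, in an $\ell_1$-sum the attaining functional may have $\|z_1^*\|=1$. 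Only the component you set equal to zero can be controlled; the other one must be handled via the trivial bound coming from the unit sphere (once $z_1^*=0$ one gets $\|w_1\|=1$, hence $\|z_1\|\le b_0$ for $b_0$ maximal, and the convex decomposition has to use a coefficient like $\|z_1\|/b_0$ rather than your $\eps/b_0$ with $\rho=b_0/\eps$, which is exactly how the paper's final step proceeds). Second, and more seriously, the crude estimate you describe (expanding $\widetilde{S}(w_1,z_1)=\widetilde{S}(w_1,0)+\widetilde{S}(0,z_1)$, using $\|\widetilde{S}(0,z)\|_a\le\delta\|z\|$ and $\|\widetilde{S}_2\|<\delta$) only yields inequalities of the shape $v(\widetilde{S})\,\|z_1^*\|\,\|z_1\|\le\delta\bigl(\|z_1^*\|+\|z_1\|\bigr)$, i.e.\ approximate smallness at best; it can never produce the \emph{exact} vanishing $z_1=0$ (type $1$) or $z_1^*=0$ (type $\infty$) that you yourself state and that the conclusion requires, since an approximately normalized pair is useless: you need a genuine element of $\Pi(W)$ at which $S$ exactly attains $v(S)$.

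The paper closes precisely this gap with a sharper estimate in which the only error term is proportional to the quantity being annihilated. In the type-$1$ case, after checking $w_1\neq0$ and $w_1^*\neq0$ and normalizing via Lemma \ref{state0}, one uses that $|\cdot|_{a^*}$ is of type $\infty$ (Lemma \ref{charac}) to write $(w_1^*,z_1^*)$ as a convex combination of $(w_1^*,0)$ and a functional of the form $(w_1^*,b\,z_1^*)$ normalized against $(w_1/\|w_1\|,0)$; this gives $|\langle\widetilde{S}(w_1,0),(w_1^*,z_1^*)\rangle|\le\|w_1\|\,v(\widetilde{S})$, while $|\langle\widetilde{S}(0,z_1),(w_1^*,z_1^*)\rangle|\le\delta\|z_1\|$. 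Combining with the type-$1$ inequality $\|w_1\|+K\|z_1\|\le1$ yields $\bigl(Kv(\widetilde{S})-\delta\bigr)\|z_1\|\le0$, hence $z_1=0$ once $\delta<K$ (and symmetrically $z_1^*=0$ in the type-$\infty$ case, using $b_0$ for $|\cdot|_a$ and $K$ for $|\cdot|_{a^*}$). This interplay between the $b_0$ of one norm and the $K$ of its dual is the crux of the proof and is exactly the step your sketch leaves unsupplied; the rest of your outline (the bound $v(S)\le v(\widetilde{S})$, $\|S-T\|<\delta$, and the final convexity argument, suitably recalibrated as above) is sound.
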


\begin{proof} Let $Z$ be a Banach space such that $X = W \oplus_a Z$. Let $\eps \in (0, 1)$ and $T \in \mathcal{L}(W)$ be given. We may consider $v(T) = 1$ by using Lemma \ref{lemmaNRA}. Define $\widetilde{T} \in \mathcal{L}(W \oplus_a Z)$ by $\widetilde{T}(w, z) := (Tw, 0)$ for every $(w, z) \in W \oplus_a Z$. By Lemma \ref{radius}, $v(\widetilde{T}) = v(T) = 1$. Since $\NRA(W \oplus_a Z)$ is dense in $\mathcal{L}(W \oplus_a Z)$, there are $\widetilde{S} \in \mathcal{L}(W \oplus_a Z)$ with $v(\widetilde{S}) = 1$ and $((w_0, z_0), (w_0^*, z_0^*)) \in \Pi(W \oplus_a Z)$ such that
\begin{equation*}
|\langle \widetilde{S}(w_0, z_0), (w_0^*, z_0^*) \rangle| = v(\widetilde{S}) = 1 \ \ \ \mbox{and} \ \ \ \|\widetilde{S} - \widetilde{T}\| < \eps.	
\end{equation*}
Set $\widetilde{S} = (\widetilde{S}_1, \widetilde{S}_2)$, where $\widetilde{S}_1: W \oplus_a Z \longrightarrow W$ and $\widetilde{S}_2: W \oplus_a Z \longrightarrow Z$. By Lemma \ref{state0}, we have
\begin{equation} \label{state}
w_0^*(w_0) = \|w_0^*\| \|w_0\| \ \ \ \mbox{and} \ \ \ z_0^*(z_0) = \|z_0^*\|\|z_0\|.	
\end{equation}

\noindent
Now for all $(w, z) \in B_{W \oplus_a Z}$, we have that
\begin{align*}
\max \{ \| \widetilde{S}_1 (w, z) - Tw \|, \| \widetilde{S}_2 (w, z)\| \} &= \| ( \widetilde{S}_1(w, z) - Tw, \widetilde{S}_2(w, z))\|_{\infty} \\
&= \|((\widetilde{S}_1(w, z), \widetilde{S}_2(w, z)) - (Tw, 0))\|_{\infty} \\
&=\| \widetilde{S}(w, z) - \widetilde{T}(w, z)\|_{\infty} \\
&\leq\| \widetilde{S}(w, z) - \widetilde{T}(w, z)\|_a
\leq \|\widetilde{S} - \widetilde{T} \| < \eps.	
\end{align*}
This implies that, for all $w \in B_W$,
\begin{equation} \label{ineq1}
	\|\widetilde{S}_1(w, 0) - Tw\| < \eps \ \ \ \mbox{and} \ \ \ \|\widetilde{S}_2\| < \eps.
\end{equation}
On the other hand, for all $z \in B_Z$, we get that
\begin{equation} \label{ineq}
	\|\widetilde{S}(0, z)\|_a = \|\widetilde{S} (0, z) - \widetilde{T}(0, z)\|_a \leq \|\widetilde{S} - \widetilde{T}\| < \eps.
\end{equation}
In particular, $\|\widetilde{S}_1(0, z)\| < \eps$ and $\|\widetilde{S}_2(0, z)\| < \eps$ for all $z \in B_Z$.

\noindent
{\it Claim:} $w_0 \not= 0$.
Otherwise, by using (\ref{ineq}), we would have
\begin{equation*}
1 = | \langle \widetilde{S}(0, z_0), (w_0^*, z_0^*) \rangle| \leq \|\widetilde{S}(0, z_0)\|_a < \eps,	
\end{equation*}
which is a contradiction.

\noindent
{\it Claim:} $w_0^* \not= 0$.
Otherwise, by using (\ref{ineq1}), we would have
\begin{equation*}
	1 = | \langle \widetilde{S}(w_0, z_0), (0, z_0^*) \rangle| = |z_0^*(\widetilde{S}_2(w_0, z_0))| \leq \| \widetilde{S}_2\| < \eps,	
\end{equation*}	
which is a new contradiction.	
	
\vspace{0.3cm}

Therefore, since $w_0, w_0^* \not=0$, by using (\ref{state}), we have that
\begin{equation} \label{state1}
\left( \left(\frac{w_0}{\|w_0\|}, 0 \right),  \left(\frac{w_0^*}{\|w_0^*\|}, 0 \right) \right) \in \Pi(W \oplus_a Z).
\end{equation}
Define the operator $S \in \mathcal{L}(W)$ by
\begin{equation*}
S(w) := \widetilde{S}_1(w, 0) \ \ \ (w \in W).
\end{equation*}
Note that for all $(w, w^*) \in \Pi(W)$, we have that $((w, 0), (w^*, 0)) \in \Pi(W \oplus_a Z)$ and then,
\begin{equation*}
|w^*(Sw)| = |w^*(\widetilde{S}_1(w, 0))| = | \langle \widetilde{S}(w, 0), (w^*, 0) \rangle| \leq v(\widetilde{S}) = 1.
\end{equation*}
This shows that $v(S) \leq 1$. Also, by using (\ref{ineq1}), note that for all $w \in S_W$,
\begin{equation*}
\|Sw - Tw\| = \|\widetilde{S}_1(w, 0) - T(w)\| < \eps.	
\end{equation*}
So, $\|S - T\| < \eps$. It remains to prove that $S$ attains its numerical radius, and we do this separating the proof in two cases.

\noindent
{\it Case 1:} Assume first that $\oplus_a$ is of type $1$. We will prove that $z_0 = 0$. Suppose not. Since $\oplus_a$ is of type $1$, by Lemma \ref{charac}.a there is $K > 0$ such that $\|w_0\| + K\|z_0\| \leq \|(w_0, z_0)\|_a = 1$. On the other hand, being $\oplus_{a^*}$ of type $\infty$ (see Lemma \ref{charac}.c), there is $b_0 > 0$ such that $|(1, b_0)|_{a^*} = 1$ and then
\begin{equation*}
\left\| \left( \frac{w_0^*}{\|w_0^*\|}, b_0 z_0^* \right) \right\|_{a^*} \leq |(1, b_0)|_{a^*} = 1.
\end{equation*}
Using (\ref{state}), we have that
\begin{equation*}
\left \langle \left( \frac{w_0}{\|w_0\|}, 0 \right), \left( \frac{w_0^*}{\|w_0^*\|}, b_0 z_0^* \right) \right \rangle = 1.
\end{equation*}
Then,
\begin{equation*}
\left\| \left( \frac{w_0^*}{\|w_0^*\|}, b_0 z_0^* \right) \right\|_{a^*} = 1 \ \ \ \mbox{and} \ \ \ \left(\left( \frac{w_0}{\|w_0\|}, 0 \right), \left( \frac{w_0^*}{\|w_0^*\|}, b_0 z_0^* \right)\right) \in \Pi(W \oplus_a Z).
\end{equation*}
This implies that
\begin{equation} \label{NRAineq}
	\left| \left \langle \widetilde{S} \left( \frac{w_0}{\|w_0\|}, 0 \right), \left( \frac{w_0^*}{\|w_0^*\|}, b_0 z_0^* \right) \right \rangle \right| \leq v(\widetilde{S}) = 1.
\end{equation}
Now, set
\begin{equation*}
(w_0^*, z_0^*) = \left(1 - \frac{1}{b_0\|w_0^*\|} \right)(w_0^*, 0) + \frac{1}{b_0 \|w_0^*\|} (w_0^*, b_0 \|w_0^*\| z_0^*).	
\end{equation*}
So, we have that
\begin{align*}
|\langle \widetilde{S}(w_0, 0), (w_0^*, z_0^*) \rangle| &= \left| \left \langle \widetilde{S}(w_0, 0), \left( 1 - \frac{1}{b_0 \|w_0^*\|} \right) (w_0^*, 0) + \frac{1}{b_0 \|w_0^*\|} (w_0^*, b_0 \|w_0^*\| z_0^*) \right \rangle \right| \\
&\leq \left(1 - \frac{1}{b_0 \|w_0^*\|} \right) |\langle \widetilde{S}(w_0, 0), (w_0^*, 0) \rangle| + \frac{1}{b_0 \|w_0^*\|} |\langle \widetilde{S}(w_0, 0), (w_0^*, b_0 \|w_0^*\| z_0^*) \rangle|
\end{align*}
By using (\ref{state1}), we get that
\begin{align*}
\left(1 - \frac{1}{b_0 \|w_0^*\|} \right) | \langle \widetilde{S}(w_0, 0), (w_0^*, 0) \rangle| &= \left(1 - \frac{1}{b_0 \|w_0^*\|} \right) \|w_0^*\| \|w_0\| \left| \left \langle \widetilde{S} \left( \frac{w_0}{\|w_0\|}, 0 \right), \left( \frac{w_0^*}{\|w_0^*\|}, 0 \right) \right \rangle \right| \\ &\leq \left(1 - \frac{1}{b_0 \|w_0^*\|}\right) \|w_0\| v(\widetilde{S}) = \left(1 - \frac{1}{b_0 \|w_0^*\|}\right) \|w_0\|.
\end{align*}
Now, using (\ref{NRAineq}),
\begin{align*}
\frac{1}{b_0 \|w_0^*\|} | \langle \widetilde{S}(w_0, 0), (w_0^*, b_0 \|w_0^*\| z_0^*) \rangle | &= \frac{1}{b_0 \|w_0^*\|} \|w_0^*\|\|w_0\| \left| \left \langle \widetilde{S} \left( \frac{w_0}{\|w_0\|}, 0 \right), \left( \frac{w_0^*}{\|w_0^*\|}, \frac{b_0 \|w_0^*\| z_0^*}{\|w_0^*\|} \right) \right \rangle \right| \\
&= \frac{1}{b_0\|w_0^*\|} \|w_0^*\| \|w_0\| \left| \left \langle \widetilde{S} \left( \frac{w_0}{\|w_0\|}, 0 \right), \left( \frac{w_0^*}{\|w_0^*\|}, b_0 z_0^*\right) \right \rangle \right| \\
&\leq \frac{1}{b_0 \|w_0^*\|} v(\widetilde{S} )\|w_0\| = 	\frac{1}{b_0 \|w_0^*\|} \|w_0\|
\end{align*}
Then,
\begin{equation*}
| \langle \widetilde{S}(w_0, 0), (w_0^*, z_0^*) \rangle| \leq \left(1 - \frac{1}{b_0 \|w_0^*\|}\right) \|w_0\| + \frac{1}{b_0\|w_0^*\|}\|w_0\| = \|w_0\|.	
\end{equation*}
On the other hand, by using (\ref{ineq}), we have that
\begin{align*}
| \langle \widetilde{S}(0, z_0), (w_0^*, z_0^*) \rangle| &= \|z_0\|   \left| \left \langle \widetilde{S} \left(0, \frac{z_0}{\|z_0\|} \right), (w_0^*, z_0^*) \right \rangle \right|	\\
&\leq 	\|z_0\| \left\| \widetilde{S} \left(0, \frac{z_0}{\|z_0\|} \right) \right\| < \eps \|z_0\|.
\end{align*}
Using these inequalities, for all $\eps \in (0, K)$, we get that
\begin{align*}
1 &= | \langle \widetilde{S}(w_0, z_0), (w_0^*, z_0^*) \rangle| \\
&\leq |\langle \widetilde{S}(w_0, 0), (w_0^*, z_0^*) \rangle| + |\langle \widetilde{S}(0, z_0), (w_0^*, z_0^*) \rangle | \\
&< \|w_0\| + \eps \|z_0\| < \|w_0\| + K \|z_0\| \leq 1,
\end{align*}
which is a contradiction. So, $z_0 = 0$.

\vspace{0.3cm}

\noindent
Being $z_0 = 0$, we have that $(w_0, w_0^*) \in \Pi(W)$ and $| \langle \widetilde{S}(w_0, 0), (w_0^*, z_0^*) \rangle | = 1$. Let us show that $v(S)=1$ and that it is attained. Indeed, since $(w_0, w_0^*) \in \Pi(W)$, we have that
\begin{equation*}
	| \langle \widetilde{S}(w_0, 0), (w_0^*, b_0 z_0^*) \rangle | \leq v(\widetilde{S}) \leq 1.	
\end{equation*}
Then,
\begin{align*}
1 = | \langle \widetilde{S}(w_0, 0), (w_0^*, z_0^*) \rangle|
&= \left| \left( 1 - \frac{1}{b_0} \right) \langle \widetilde{S}(w_0, 0), (w_0^*, 0) \rangle + \frac{1}{b_0} \langle \widetilde{S}(w_0, 0), (w_0^*, b_0 z_0^*) \rangle \right| \\
&\leq 	 \left( 1 - \frac{1}{b_0} \right) | \langle \widetilde{S}(w_0, 0), (w_0^*, 0) \rangle| + \frac{1}{b_0} |\langle \widetilde{S}(w_0, 0), (w_0^*, b_0 z_0^*) \rangle | \\
&= \left( 1 - \frac{1}{b_0} \right) |  w_0^*(\widetilde{S}_1(w_0, 0)) | + \frac{1}{b_0} |\langle \widetilde{S}(w_0, 0), (w_0^*, b_0 z_0^*) \rangle | \\
&= \left( 1 - \frac{1}{b_0} \right) | w_0^*(Sw_0)| + \frac{1}{b_0} |\langle \widetilde{S}(w_0, 0), (w_0^*, b_0 z_0^*) \rangle | \\
&\leq \left(1 - \frac{1}{b_0} \right) v(S) + \frac{1}{b_0} v(\widetilde{S}) \leq 1.
\end{align*}
This implies that $|w_0^*(Sw_0)| = v(S) = 1$.

\noindent
{\it Case 2:} Now assume that $\oplus_a$ is of type $\infty$. We will prove that $z_0^* = 0$. Suppose not. By Lemma \ref{charac}.b, there is $b_0 > 0$ such that $|(1, b_0)|_a = 1$. Then,
\begin{equation*}
\left\| \left(\frac{w_0}{\|w_0\|}, b_0 z_0 \right) \right\|_a \leq |(1, b_0)|_a = 1.
\end{equation*}
By (\ref{state1}), we have that
\begin{equation*}
\left \langle \left( \frac{w_0}{\|w_0\|}, b_0 z_0 \right), \left( \frac{w_0^*}{\|w_0^*\|} , 0 \right) \right \rangle = 1	
\end{equation*}
So,
\begin{equation*}
\left\| \left(\frac{w_0}{\|w_0\|}, b_0 z_0 \right) \right\|_a = 1 \qquad \text{and} \qquad \left( \left( \frac{w_0}{\|w_0\|}, b_0 z_0 \right), \left( \frac{w_0^*}{\|w_0^*\|} , 0 \right) \right) \in \Pi(W \oplus_a Z).
\end{equation*}
This implies that
\begin{equation*}
\left| \left \langle \widetilde{S} \left( \frac{w_0}{\|w_0\|}, b_0 z_0 \right), \left( \frac{w_0^*}{\|w_0^*\|}, 0 \right) \right \rangle \right| \leq v(\widetilde{S}) = 1.	
\end{equation*}

\noindent
Since $\oplus_{a^*}$ is of type $1$ (see Lemma \ref{charac}.d), there is $K > 0$ such that $\|w_0^*\| + K \|z_0^*\| \leq \|(w_0^*, z_0^*) \|_{a^*} = 1$. Set
\begin{equation*}
(w_0, z_0) = \left(1 - \frac{1}{b_0 \|w_0\|} \right) (w_0, 0) + \frac{1}{b_0 \|w_0\|} (w_0, b_0 \|w_0\| z_0).	
\end{equation*}
Then,
\begin{align*}
1 &= | \langle \widetilde{S}(w_0, z_0), (w_0^*, z_0^*) \rangle | \\
&= | \langle \widetilde{S}(w_0, z_0), (w_0^*, 0) \rangle + \langle \widetilde{S}(w_0, z_0), (0, z_0^*) \rangle | \\
&\leq \left|  \left \langle \widetilde{S} \left( \left( 1 - \frac{1}{b_0 \|w_0\|} \right) (w_0, 0) + \frac{1}{b_0 \|w_0\|}  (w_0, b_0 \|w_0\| z_0) \right), (w_0^*, 0) \right \rangle \right| + |\langle \widetilde{S}(w_0, z_0), (0, z_0^*) \rangle | \\
&\leq \left( 1 - \frac{1}{b_0 \|w_0\|} \right) | \langle \widetilde{S} (w_0, 0), (w_0^*, 0) \rangle| + \frac{1}{b_0 \|w_0\|} | \langle \widetilde{S}(w_0, b_0 \|w_0\| z_0), (w_0^*, 0) \rangle| + |\langle \widetilde{S}(w_0, z_0), (0, z_0^*) \rangle |
\end{align*}
But we have that
\begin{align*}
\left( 1 - \frac{1}{b_0 \|w_0\|} \right) | \langle \widetilde{S} (w_0, 0), (w_0^*, 0) \rangle| &= \left( 1 - \frac{1}{b_0 \|w_0\|} \right) \|w_0^*\|\|w_0\| \left| \left\langle \widetilde{S} \left( \frac{w_0}{\|w_0\|}, 0\right), \left(\frac{w_0^*}{\|w_0^*\|}, 0 \right) \right\rangle \right| \\
&\leq\left( 1 - \frac{1}{b_0 \|w_0\|} \right) \|w_0^*\|
\end{align*}
and
\begin{align*}
\frac{1}{b_0 \|w_0\|} | \langle \widetilde{S}(w_0, b_0 \|w_0\| z_0), (w_0^*, 0) \rangle | &= \frac{1}{b_0 \|w_0\|} \|w_0^*\|\|w_0\| \left| \left \langle \widetilde{S} \left( \frac{w_0}{\|w_0\|}, \frac{b_0 \|w_0\| z_0}{\|w_0\|} \right), \left( \frac{w_0^*}{\|w_0^*\|}, 0 \right) \right \rangle \right| \\
&= \frac{1}{b_0 \|w_0\|} \|w_0^*\|\|w_0\| \left| \left \langle \widetilde{S} \left( \frac{w_0}{\|w_0\|}, b_0 z_0 \right), \left( \frac{w_0^*}{\|w_0^*\|}, 0 \right) \right \rangle \right| \\
&\leq	\frac{1}{b_0 \|w_0\|}\|w_0^*\| v(\widetilde{S}) = \frac{1}{b_0 \|w_0\|}\|w_0^*\|.
\end{align*}
Moreover, by using (\ref{ineq1}), we have that
\begin{equation*}
| \langle \widetilde{S}(w_0, z_0), (0, z_0^*) \rangle| = \|z_0^*\| 	\left| \left\langle \widetilde{S}(w_0, z_0), \left(0, \frac{z_0^*}{\|z_0^*\|} \right) \right\rangle \right| = \|z_0^*\| \left| \left(\frac{z_0^*}{\|z_0^*\|}\right) \widetilde{S}_2(w_0, z_0) \right|  \leq \|z_0^*\| \|\widetilde{S}_2\| < \eps \|z_0^*\|.
\end{equation*}
Therefore, for all $\eps \in (0, K)$, we have
\begin{align*}
1 &\leq \left( 1 - \frac{1}{b_0 \|w_0\|} \right) | \langle \widetilde{S} (w_0, 0), (w_0^*, 0) \rangle| + \frac{1}{b_0 \|w_0\|} | \langle \widetilde{S}(w_0, b_0 \|w_0\| z_0), (w_0^*, 0) \rangle| + |\langle \widetilde{S}(w_0, z_0), (0, z_0^*) \rangle | \\
&\leq \left(1 - \frac{1}{b_0 \|w_0\|} \right) \|w_0^*\| + \frac{1}{b_0 \|w_0\|} \|w_0^*\| + \eps \|z_0^*\|	\\
&= \|w_0^*\| + \eps \|z_0^*\| < \|w_0^*\| + K \|z_0^*\| \leq 1.
\end{align*}
This contradiction gives $z_0^* = 0$. So, $(w_0, w_0^*) \in \Pi(W)$ and
\begin{equation*}
1 = | \langle \widetilde{S}(w_0, z_0), (w_0^*, 0) \rangle| = |w_0^*(\widetilde{S}_1(w_0, z_0))|.
\end{equation*}
Now, since $((w_0, b_0 z_0), (w_0^*, 0)) \in \Pi(W)$, we have that
\begin{align*}
1 &= | \langle \widetilde{S}(w_0, z_0), (w_0^*, 0) \rangle | \\
&\leq \left(1 - \frac{1}{b_0} \right) |\langle \widetilde{S}(w_0, 0), (w_0^*, 0) \rangle| + \frac{1}{b_0} | \langle \widetilde{S}(w_0, b_0 z_0), (w_0^*, 0) \rangle| \\
&\leq \left(1 - \frac{1}{b_0} \right) |w_0^*(Sw_0)| + \frac{1}{b_0} v(\widetilde{S}) \\
&\leq 	\left(1 - \frac{1}{b_0} \right) v(S) + \frac{1}{b_0} v(\widetilde{S}) \leq 1.
\end{align*}
So, $|w_0^*(Sw_0)| = v(S) = 1$ and this finishes the proof.
\end{proof}

As far as we know, Proposition \ref{NRA1} is new even for $L$- and $M$-summands.

\begin{cor} \label{NR1} Let $X$ be Banach space.
\begin{itemize}	
\item[(a)] If $W$ is an $L$-summand of $X$ and $\overline{\NRA(X)} = \mathcal{L}(X)$, then $\overline{\NRA(W)} = \mathcal{L}(W)$
\item[(b)] If $W$ is an $M$-summand of $X$ and $\overline{\NRA(X)} = \mathcal{L}(X)$, then $\overline{\NRA(W)} = \mathcal{L}(W)$
\end{itemize}
\end{cor}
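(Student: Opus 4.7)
The plan is very short: this corollary is an immediate specialization of Proposition \ref{NRA1}, so I would simply point out why both the $L$-summand and $M$-summand hypotheses fit into the framework of that proposition.

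First I would recall from the paragraph introducing the definition of absolute summand that the $\ell_1$-norm on $\R^2$ is an absolute norm, and the $L$-summand decomposition $X = W \oplus_1 Z$ is by definition the absolute sum $X = W \oplus_a Z$ with $|\cdot|_a = \|\cdot\|_1$. Then I would note that the $\ell_1$-norm is of type $1$: indeed $|(1,0)|_1 = 1$ and if $(1,0) = \frac{1}{2}(a,b) + \frac{1}{2}(a',b')$ with $|(a,b)|_1 = |(a',b')|_1 = 1$, then taking $(a,b) = (1+t, s)$ and $(a',b') = (1-t, -s)$ we obtain $|1+t|+|s| = |1-t|+|s| = 1$, forcing $s=0$ and $t=0$; the dual functional $(x,y) \mapsto x$ separates points, so $(1,0)$ is a vertex. (Equivalently, this is exactly Lemma \ref{charac}.a with $K = 1$.) Therefore part (a) follows from Proposition \ref{NRA1} applied to a type $1$ absolute norm.

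For part (b), I would use that the $M$-summand decomposition corresponds to the $\ell_\infty$-norm on $\R^2$, which is an absolute norm of type $\infty$: Lemma \ref{charac}.b applies with any $b_0 \in (0,1]$, since $|(1,b_0)|_\infty = 1$ whenever $0 \le b_0 \le 1$. (Alternatively, $(1,0) = \frac{1}{2}(1,1) + \frac{1}{2}(1,-1)$ with both summands of $\ell_\infty$-norm equal to $1$, so $(1,0)$ is not extreme.) Thus Proposition \ref{NRA1} applied to a type $\infty$ absolute norm yields part (b).

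There is no real obstacle here: the corollary is essentially a restatement of Proposition \ref{NRA1} in the two most classical special cases, and the whole proof reduces to identifying $\ell_1$ and $\ell_\infty$ as absolute norms of type $1$ and type $\infty$ respectively. I would write it as a three-line proof in the paper.
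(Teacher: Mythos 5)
Your proposal is correct and matches the paper's intent exactly: the corollary is stated as an immediate consequence of Proposition \ref{NRA1}, obtained by observing that the $\ell_1$-norm on $\R^2$ is an absolute norm of type $1$ (Lemma \ref{charac}.a with $K=1$) and the $\ell_\infty$-norm is of type $\infty$ (Lemma \ref{charac}.b with any $b_0\in(0,1]$). The only cosmetic remark is that your hand verification that $(1,0)$ is a vertex of $B_{\ell_1^2}$ is phrased a bit loosely (a single functional does not separate points; it is the whole set $D(X,x)$ that must), but this is moot since the appeal to Lemma \ref{charac}.a already settles it.
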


Adapting Proposition \ref{NRA1} to the compact case, we have the following result.

\begin{prop} \label{NRAK}  Let $X$ be a Banach space and let $W$ be an absolute summand of $X$ of type 1 or $\infty$. If $\NRA(X) \cap \mathcal{K}(X)$ is dense in $\mathcal{K}(X)$, then $\NRA(W) \cap \mathcal{K}(W)$ is dense in $\mathcal{K}(W)$.
\end{prop}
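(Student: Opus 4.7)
The plan is to parallel the proof of Proposition \ref{NRA1} step by step, verifying at each stage that compactness is preserved. First I would observe that Lemma \ref{lemmaNRA} carries over to the compact setting: since $\mathcal{K}(X)$ is closed under scalar multiplication, the same normalization argument shows that to prove density of $\NRA(W)\cap\mathcal{K}(W)$ in $\mathcal{K}(W)$ it suffices to approximate, for any $T\in\mathcal{K}(W)$ with $v(T)=1$ and any $\eps>0$, by some $S\in\mathcal{K}(W)$ with $v(S)=1$ and $\|S-T\|<\eps$.

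So fix such $T\in\mathcal{K}(W)$ with $v(T)=1$, write $X=W\oplus_a Z$, and define $\widetilde{T}\in\mathcal{L}(W\oplus_a Z)$ by $\widetilde{T}(w,z):=(Tw,0)$. This operator is compact, since it factors as the continuous projection $(w,z)\mapsto w$ composed with the compact $T$ composed with the continuous inclusion $w\mapsto (w,0)$. By Lemma \ref{radius}, $v(\widetilde{T})=v(T)=1$. Applying the hypothesis $\overline{\NRA(X)\cap \mathcal{K}(X)}=\mathcal{K}(X)$ to $\widetilde{T}$ (together with the compact analogue of Lemma \ref{lemmaNRA}), we obtain a compact operator $\widetilde{S}\in\mathcal{K}(W\oplus_a Z)$ with $v(\widetilde{S})=1$ and a pair $((w_0,z_0),(w_0^*,z_0^*))\in\Pi(W\oplus_a Z)$ such that
\begin{equation*}
|\langle \widetilde{S}(w_0,z_0),(w_0^*,z_0^*)\rangle|=1\quad\text{and}\quad\|\widetilde{S}-\widetilde{T}\|<\eps.
\end{equation*}

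Now define $S\in\mathcal{L}(W)$ exactly as in the proof of Proposition \ref{NRA1} by $S(w):=\widetilde{S}_1(w,0)$, where $\widetilde{S}=(\widetilde{S}_1,\widetilde{S}_2)$. The key observation is that $S$ is compact: it is the composition of the continuous inclusion $w\mapsto (w,0)$ with the compact operator $\widetilde{S}$ with the continuous projection $(u,v)\mapsto u$. The remaining verifications — that $\|S-T\|<\eps$, that $(w_0,w_0^*)\in\Pi(W)$ (by forcing $z_0=0$ in the type~$1$ case or $z_0^*=0$ in the type~$\infty$ case), and that $|w_0^*(Sw_0)|=v(S)=1$ — are literally the same computations carried out in the proof of Proposition \ref{NRA1}, which rely only on Lemma \ref{charac}, on the absolute-sum duality identities and on the convex combinations with $(w_0,b_0 z_0)$ or $(w_0^*,b_0 z_0^*)$; none of these manipulations involves compactness, so they apply verbatim.

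There is essentially no new obstacle here: the only thing to check beyond the scalar-field proof of Proposition \ref{NRA1} is that compactness is preserved under the two operations we perform, namely $T\rightsquigarrow\widetilde{T}$ and $\widetilde{S}\rightsquigarrow S$. Both are compositions with continuous (in fact, norm-one) inclusions and projections between $W$ and $W\oplus_a Z$, and so preserve the compact ideal. Consequently the proof of Proposition \ref{NRA1} transfers to $\mathcal{K}(W)$ with only cosmetic changes, establishing Proposition \ref{NRAK}.
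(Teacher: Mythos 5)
Your proposal is correct and is essentially the paper's own argument: the paper proves Proposition~\ref{NRAK} simply by adapting the proof of Proposition~\ref{NRA1} to compact operators, which is exactly what you do, checking that the two passages $T\rightsquigarrow\widetilde{T}$ and $\widetilde{S}\rightsquigarrow S$ (compositions with the norm-one inclusion and projection between $W$ and $W\oplus_a Z$) preserve compactness and that the normalization of Lemma~\ref{lemmaNRA} survives restriction to $\mathcal{K}$. One cosmetic remark: in your reduction you should also ask that the approximant $S$ attain its numerical radius (not merely $v(S)=1$), but since the operator you construct satisfies $|w_0^*(Sw_0)|=v(S)=1$, the argument as carried out is complete.
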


Let us finally discuss on the validity of reciprocal results. In \cite[Remark 2.5]{KLM1} that the sets $\NRA(C[0, 1] \oplus_1 L_1[0, 1])$ and $\NRA(C[0, 1] \oplus_\infty L_1[0, 1])$ are not dense, respectively, in $\mathcal{L}(C[0, 1] \oplus_1 L_1[0, 1])$ and $\mathcal{L}(C[0, 1] \oplus_\infty L_1[0, 1])$ was observed. As both $C[0,1]$ and $L_1[0,1]$ have the BPBp for numerical radius \cite{AGR,KLM1}, this shows that there is no possible valid reciprocal results for Theorem \ref{numericalradius1} and Proposition \ref{NRA1}. On the other hand,  the sets $\NRA(C[0, 1] \oplus_1 L_1[0, 1]) \cap \mathcal{K}(C[0, 1] \oplus_{1} L_1[0, 1])$ and $\NRA(C[0, 1] \oplus_{\infty} L_1[0, 1]) \cap \mathcal{K}(C[0, 1] \oplus_{\infty} L_1[0, 1])$ are dense in $\mathcal{K}(C[0, 1] \oplus_1 L_1[0, 1])$ and $\mathcal{K}(C[0, 1] \oplus_{\infty} L_1[0, 1])$, respectively (see \cite[Example 3.4]{CapMM}). Nevertheless, we do not know if there is some reciprocal result for the BPBp-nu for compact operators.

\end{document}